\documentclass[11pt]{article}

\usepackage{amsmath,amssymb,latexsym,amsthm}
\usepackage{tikz}
\usepackage[T1]{fontenc}
\usepackage{cite}

\usepackage{hyperref}
\hypersetup{colorlinks=true}
\hypersetup{colorlinks=true, linkcolor=blue, citecolor=blue,urlcolor=blue}

\newtheorem{theorem}{Theorem}[section]
\newtheorem{definition}[theorem]{Definition}
\newtheorem{lemma}[theorem]{Lemma}
\newtheorem{proposition}[theorem]{Proposition}
\newtheorem{observation}[theorem]{Observation}

\newtheorem{corollary}[theorem]{Corollary}

\newcommand{\pad}[2]{\Phi^{#1}_{#2}}
\def\cp{\,\square\,}
\DeclareMathOperator{\aut}{Aut}
\DeclareMathOperator{\diam}{diam}

\textwidth15cm
\textheight20cm
\oddsidemargin 0.4cm
\evensidemargin 0.4cm
\voffset-1cm

\usepackage[normalem]{ulem}

%% My definitions
%\newcommand{\proof}{\noindent{\bf Proof.\ }}
%\newcommand{\qed}{\hfill $\square$ \bigskip}
% \newcommand{\cp}{\,\square\,}
% \newcommand{\smallqed}{{\tiny ($\Box$)}}
% \newcommand{\diam}{{\rm diam}}
% \newcommand{\sk}[1]{{\color{blue}#1}}

\title{Weighted Padovan graphs}

\author{
Vesna Ir\v si\v c Chenoweth $^{1,2,}$\thanks{Email: \texttt{vesna.irsic@fmf.uni-lj.si}}
\and
Sandi Klav\v zar $^{1,2,3,}$\thanks{Email: \texttt{sandi.klavzar@fmf.uni-lj.si}}
\and 
Gregor Rus $^{1,2,}$\thanks{Email: \texttt{gregor.rus@fmf.uni-lj.si}}
\and
Elif Tan $^{4,}$\thanks{Email: \texttt{etan@ankara.edu.tr}}
}

\begin{document}

\maketitle

\begin{center}
$^1$ Faculty of Mathematics and Physics,  University of Ljubljana, Slovenia\\
\medskip

$^2$ Institute of Mathematics, Physics and Mechanics, Ljubljana, Slovenia\\
\medskip

$^3$ Faculty of Natural Sciences and Mathematics,  University of Maribor, Slovenia\\
\medskip

$^4$ Department of Mathematics, Ankara University, Ankara, Türkiye  \\
\end{center}

\begin{abstract}
Weighted Padovan graphs $\Phi^{n}_{k}$, $n \geq 1$, $\lfloor \frac{n}{2} \rfloor \leq k \leq \lfloor \frac{2n-2}{3} \rfloor$, are introduced as the graphs whose vertices are all Padovan words of length $n$ with $k$ $1$s, two vertices being adjacent if one can be obtained from the other by replacing exactly one $01$ with a $10$. By definition, $\sum_k |V(\Phi^{n}_{k})|=P_{n+2}$, where $P_n$ is the $n$th Padovan number. 
Two families of graphs isomorphic to weighted Padovan graphs are presented. The order, the size, the degree, the diameter, the cube polynomial, and the automorphism group of weighted Padovan graphs are determined. It is also proved that they are median graphs.
\end{abstract}

\noindent
{\bf Keywords:} Padovan sequence, weighted Padovan graph, integer partition, median graph

\medskip
\noindent
{\bf AMS Subj.\ Class.\ (2020):} 05C75, 05C12, 05C30, 11B39

\maketitle

\section{Introduction}

The {\em Fibonacci sequence} is one of the most famous sequences in mathematics. The $n$th Fibonacci number $F_{n}$ is defined by $F_{n}=F_{n-1}+F_{n-2}$, $n\geq 2$, with initial values $F_{0}=0$ and $F_{1}=1$. Its analogous sequence, {\em Lucas sequence}, has the same recurrence relation but begins with initial values $L_{0}=2$ and $L_{1}=1$. Similarly, the {\em Pell sequence} has the same initial values as the Fibonacci sequence, but now the recurrence reads as the sum of twice the previous term plus the pre-previous term. Fibonacci numbers and their generalizations have many interesting properties and many applications in science and art, see~\cite{koshy-2019}. We also refer to the book~\cite{koshy-2014} for Pell and Pell-Lucas numbers and their applications.

The Fibonacci sequence and the Lucas sequence inspired the investigation of different interesting families of graphs such as Fibonacci and Lucas cubes~\cite{egecioglu-2021b, hsu-1993, klavzar-2023, mollard-2021, munarini-2001}, Pell graphs~\cite{munarini-2019}, generalized Pell graphs~\cite{irsic-2023}, metallic cubes~\cite{doslic-2024}, Fibonacci and Lucas $p$-cubes~\cite{wei-2022}, and Fibonacci-run graphs~\cite{egecioglu-irsic-2021}, to list just some of them. The state of research on Fibonacci cubes and related topics up to 2013 is summarised in the survey paper~\cite{klavzar-2013}, while for the state of the art results on Fibonacci and related cubes see the 2023 book~\cite{egecioglu-2023}.

The Padovan numbers, which are named after the architect 
Richard Padovan, see~\cite{padovan}, are defined by the third order recurrence relation%
\begin{equation*}
P_{n}=P_{n-2}+P_{n-3},\ n\geq 3,
\end{equation*}%
with initial values $P_{0}=1$, $P_{1}=P_{2}=0$. In the Online Encyclopedia of Integer Sequences, the Padovan sequence appears as~\cite[A000931]{OEIS}. The first few terms are
\begin{equation*}
1,0,0,1,0,1,1,1,2,2,3,4,5,7,9,12,16,21,28,37,49,65,86,114,\ldots
\end{equation*}%
The associated generating function is 
\begin{equation*}
\sum_{n\geq 0}P_{n}x^{n}=\frac{1-x^2}{1-x^{2}-x^{3}}.
\end{equation*}%

Recently, Lee and Kim~\cite{lee-2023-1} introduced the {\em Padovan cubes} by using only the odd terms $P_1, P_3, P_5, \ldots$ of the Padovan sequence. A motivation for their definition is that every positive integer can be represented uniquely as the sum of one or more odd terms of the Padovan sequence such that the sum does not include any three consecutive odd terms. In~\cite{lee-2023-2} the investigation of the Padovan cubes continued by investigating their cube polynomials. 

The main impetus for the investigation in this paper is to consider all the terms of the Padovan sequence to construct a respective family of graphs. This is done formally in the next section by introducing the weighted Padovan graphs $\pad{n}{k}$. In the same section two isomorphic families of graphs are presented. In Section~\ref{sec:order-size-degree}, we determine  the order, the size and the degree of weighted Padovan graphs. In the subsequent section we investigate their metric properties, and in particular prove that they are median graphs.  In Section~\ref{sec:cube-poly}, the cube polynomial of weighted Padovan graphs is determined, as well as its generating function. In the last section we find all symmetries of the studied family of graphs.

%%%%%%%%%%%%%%%%%%%%%%%%%%%%%%%%%%%%%
\section{Weighted Padovan graphs and two isomorphic families}
\label{sec:three families}
%%%%%%%%%%%%%%%%%%%%%%%%%%%%%%%%%%%%%

In this section we introduce the weighted Padovan graphs and two isomorphic families of graphs which will both be useful for proving properties of weighted Padovan graphs in the rest of the paper. The first of the two families has a word representation, while the second one is defined by integer partitions.

If $A$ is an alphabet, then a {\em word} over $A$ is a sequence of letters from $A$. When $A=\{0,1\}$, we speak of a {\em binary word}. By a {\em subword} of a word we mean a subsequence of consecutive letters of the word.

\begin{definition}
A binary word is \emph{Padovan}, if it
\begin{itemize}
\item starts and ends with $0$, 
\item contains no subword $00$, and
\item contains no subword $111$.
\end{itemize}
By $\mathcal{P}_{n}$ we denote the set of Padovan words of length $n\ge 1$. 
\end{definition}
Note that $|\mathcal{P}_{0}| = |\mathcal{P}_{1}| = |\mathcal{P}_{2}| = 1$. Moreover, as noted by Yifan Xie in~\cite[Sequence A000931]{OEIS}, if $n\ge 3$, then we have $$|\mathcal{P}_{n}|=P_{n+2}\,.$$

Since Padovan words contain no subword $00$, the minimum number of $1$s in a Padovan word of length $n$ is $\left \lfloor \frac{n}{2} \right \rfloor$. This is attained, for example, by the binary word $01\ldots010$ if $n$ is odd, and by $01\ldots0110$ if $n$ is even. Similarly, since Padovan words contain no $111$, the maximum number of $1$s in a Padovan word of length $n$ is $\left \lfloor \frac{2n - 2}{3} \right \rfloor$. This is attained, for example, by $011\ldots011010$ if $n \bmod 3  = 0$, by $011\ldots 0110$ if $n \bmod 3 = 1$, and by $011 \ldots 01101010$ if $n \bmod 3 = 2$. Thus there are 
$$\left \lfloor \frac{2n - 2}{3} \right \rfloor - \left \lfloor \frac{n}{2} \right \rfloor + 1 = \left \lfloor \frac{n+1}{2} \right \rfloor - \left \lfloor \frac{n+1}{3} \right \rfloor$$ 
different possibilities for the number of $1$s in a Padovan word of length $n$. 

Our key definition now reads as follows.

\begin{definition}
\label{def:padovan}
    The \emph{Padovan graph $\pad{n}{k}$ of length $n$ and weight $k$}, $n \geq 1$, $\left \lfloor \frac{n}{2} \right \rfloor \leq k \leq \left \lfloor \frac{2n-2}{3} \right \rfloor$, is the graph whose vertices are all Padovan words of length $n$ with $k$ $1$s, two vertices being adjacent if one can be obtained from the other by replacing exactly one subword $01$ with a $10$. The family of these graphs will be called {\em weighted Padovan graphs}.  
\end{definition}

\begin{observation}
    \label{obs:padovan-vertices}
    If $n \geq 1$, then $$\sum_{k = \left \lfloor \frac{n}{2} \right \rfloor}^{\left \lfloor \frac{2n-2}{3} \right \rfloor} |V(\pad{n}{k})| = P_{n+2}\,.$$
\end{observation}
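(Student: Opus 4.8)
The plan is to recognize the left-hand sum as a partition of the set $\mathcal{P}_n$ of all Padovan words of length $n$ according to their weight, so that the identity reduces to the already-noted count $|\mathcal{P}_n| = P_{n+2}$.

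First I would record that, directly from Definition~\ref{def:padovan}, the vertex set $V(\pad{n}{k})$ is precisely the set of Padovan words of length $n$ having exactly $k$ ones; in particular these sets are pairwise disjoint for distinct values of $k$. Next I would invoke the two extremal computations carried out just before the definition: every Padovan word of length $n$ has at least $\lfloor n/2\rfloor$ ones (because $00$ is forbidden) and at most $\lfloor(2n-2)/3\rfloor$ ones (because $111$ is forbidden). Consequently, as $k$ runs over the summation range $\lfloor n/2\rfloor \le k \le \lfloor(2n-2)/3\rfloor$, the sets $V(\pad{n}{k})$ form a partition of $\mathcal{P}_n$, whence
\[
\sum_{k=\lfloor n/2\rfloor}^{\lfloor(2n-2)/3\rfloor} |V(\pad{n}{k})| = |\mathcal{P}_n|.
\]

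It then remains to match $|\mathcal{P}_n|$ with $P_{n+2}$. For $n\ge 3$ this is exactly the fact attributed to Xie and recorded after the definition of $\mathcal{P}_n$, so the identity follows at once. The two remaining small cases are handled by hand: for $n=1$ the single word $0$ gives the sum $1 = P_3$, and for $n=2$ the summation range is empty (since $\lfloor 2/2\rfloor = 1 > 0 = \lfloor 2/3\rfloor$), so the sum is $0 = P_4$, which is consistent since no word of length $2$ can start and end with $0$ while avoiding $00$.

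I expect no genuine obstacle here; the only points requiring care are the bookkeeping of the extremal weights (already supplied in the text) and the degenerate cases $n\in\{1,2\}$, where one must be mindful that the combinatorial count of length-$2$ Padovan words is $0$ rather than the conventional value assigned to $|\mathcal{P}_2|$. Should a self-contained argument be preferred to citing the OEIS fact, I would instead establish $|\mathcal{P}_n| = P_{n+2}$ directly: every Padovan word ends in $0$, hence in $10$, and the letter three from the end is either $0$ or, forcing a preceding $0$, a $1$, so the word ends in $010$ or in $0110$; stripping this suffix yields a Padovan word of length $n-2$ or $n-3$, giving the recurrence $|\mathcal{P}_n| = |\mathcal{P}_{n-2}| + |\mathcal{P}_{n-3}|$, which together with the initial values reproduces the shifted Padovan numbers.
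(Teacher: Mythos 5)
Your proof is correct and takes essentially the same route the paper intends for this (unproved) observation: the sets $V(\pad{n}{k})$ partition $\mathcal{P}_n$ by weight thanks to the extremal bounds on the number of $1$s, and then $|\mathcal{P}_n| = P_{n+2}$ gives the identity. Your explicit handling of $n\in\{1,2\}$ --- including the correct remark that the genuine count is $|\mathcal{P}_2|=0=P_4$ (despite the paper's stated convention $|\mathcal{P}_2|=1$) --- and the optional suffix-stripping recurrence $|\mathcal{P}_n|=|\mathcal{P}_{n-2}|+|\mathcal{P}_{n-3}|$ are sound additions, but they do not change the approach.
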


The switching adjacency rule ``$01$ to $10$'' in the definition of a Padovan graph clearly preserves the number of $1$s. In order that the graphs considered are connected, this is the reason the parameter $k$ counting the number of $1$s is present. For the graphs to be connected it is thus necessary to use parameter $k$.

In Table~\ref{tab:small-Padovan} the Padovan graphs $\pad{n}{k}$ are listed for $n\le 10$ and all respective $k$s. 

\begin{table}[ht!]
    \centering
    \begin{tabular}{|c|c|c|c|}
    \hline
         $n$ & $\left \lfloor \frac{n}{2} \right \rfloor$ & $\left \lfloor \frac{2n-2}{3} \right \rfloor$ & $\{\pad{n}{k}:\ \left \lfloor \frac{n}{2} \right \rfloor \le k\le \left \lfloor \frac{2n-2}{3} \right \rfloor\}$  \\
         \hline\hline
1 & 0 & 0 & $\{K_1\}$ \\ \hline
2 & 1 & 0 & $\emptyset$ \\ \hline 
3 & 1 & 1 & $\{K_1\}$ \\ \hline
4 & 2 & 2 & $\{K_1\}$ \\ \hline
5 & 2 & 2 & $\{K_1\}$ \\ \hline
6 & 3 & 3 & $\{K_2\}$ \\ \hline
7 & 3 & 4 & $\{K_1\}$ \\ \hline
8 & 4 & 5 & $\{P_3\}$ \\ \hline
9 & 4 & 5 & $\{K_1, P_3\}$ \\ \hline
10 & 5 & 6 & $\{K_1, P_4\}$ \\ \hline 
    \end{tabular}
 \vspace*{5mm}
    \caption{The list of Padovan graphs with $n\le 10$. Here $P_3$ and $P_4$ denote the path graphs on three and four vertices, respectively.}
    \label{tab:small-Padovan}
\end{table}

For $n \geq 11$, the structure of the graphs becomes more interesting. For example, if $n=11$, possible weights $k$ are $5$ and $6$, and the obtained graphs are drawn in Figure~\ref{fig:padovan-11}. Another example is shown in Figure~\ref{fig:padovan-15}, where $n = 15$, thus $k \in \{7,8,9\}$.

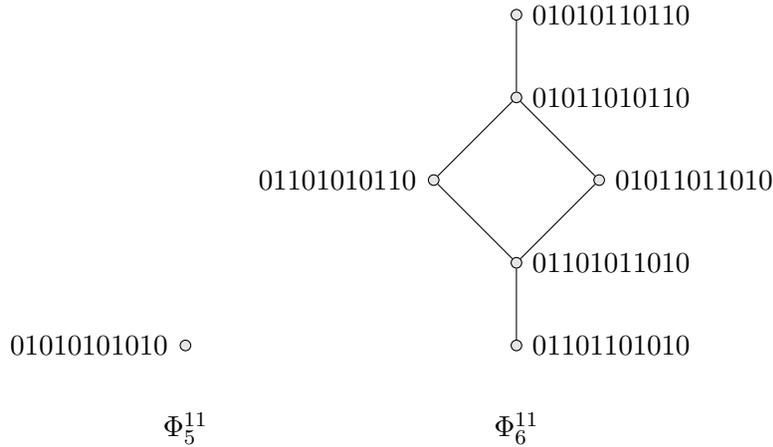
\begin{figure}[ht!]
    \centering
    \begin{tikzpicture}
    [scale=1.1,
    vert/.style={circle, draw, fill=black!10, inner sep=0pt, minimum width=4pt}, 
    double/.style={circle, draw, fill=black, inner sep=0pt, minimum width=4pt}, 
    central/.style={circle, draw, fill=black, inner sep=0pt, minimum width=4pt},
    ]

    \node[vert, label=right:$01101101010$] (0) at (0,0) {};
    \node[vert, label=right:$01101011010$] (1) at (0,1) {};
    \node[vert, label=left:$01101010110$] (2) at (-1,2) {};
    \node[vert, label=right:$01011011010$] (3) at (1,2) {};
    \node[vert, label=right:$01011010110$] (4) at (0,3) {};
    \node[vert, label=right:$01010110110$] (5) at (0,4) {};    
 
    \draw (0) -- (1) -- (2) -- (4) -- (5);
    \draw (1) -- (3) -- (4);

    \node (l1) at (0,-1) {$\pad{11}{6}$};

    \node[vert, label=left:$01010101010$] (6) at (-4,0) {};

    \node (l2) at (-4,-1) {$\pad{11}{5}$};
    
    \end{tikzpicture}
    \caption{Both weighted Padovan graphs for $n = 11$.}
    \label{fig:padovan-11}
\end{figure}

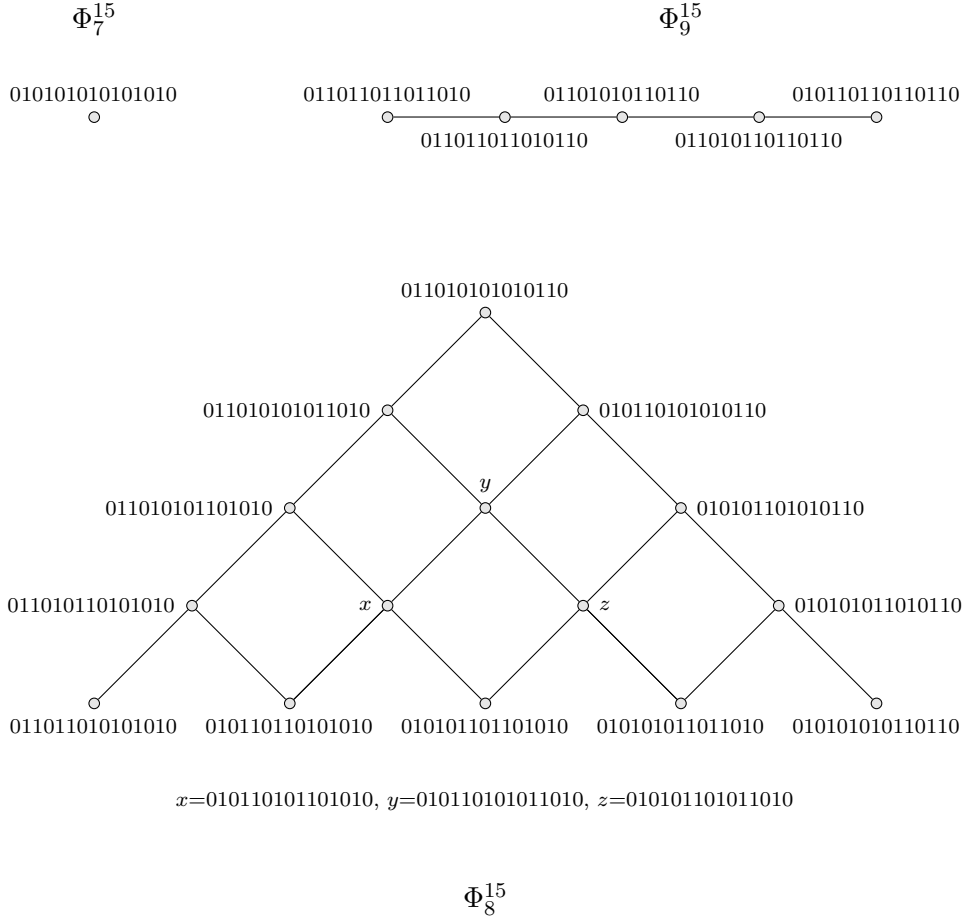
\begin{figure}[ht!]
    \centering
    \begin{tikzpicture}
    [scale=1.3,
    vert/.style={circle, draw, fill=black!10, inner sep=0pt, minimum width=4pt}, 
    double/.style={circle, draw, fill=black, inner sep=0pt, minimum width=4pt}, 
    central/.style={circle, draw, fill=black, inner sep=0pt, minimum width=4pt},
    ]

    \node[vert, label=below:$\scriptstyle{011011010101010}$] (0) at (0,0) {};
    \node[vert, label=left:$\scriptstyle{011010110101010}$] (1) at (1,1) {};
    \node[vert, label=left:$\scriptstyle{011010101101010}$] (2) at (2,2) {};
    \node[vert, label=left:$\scriptstyle{011010101011010}$] (3) at (3,3) {};
    \node[vert, label=above:$\scriptstyle{011010101010110}$] (4) at (4,4) {};
    
    \node[vert, label=below:$\scriptstyle{010110110101010}$] (5) at (2,0) {};
    \node[vert, label=left:$\scriptstyle{x}$] (6) at (3,1) {};
    \node[vert, label=above:$\scriptstyle{y}$] (7) at (4,2) {};
    \node[vert, label=right:$\scriptstyle{010110101010110}$] (8) at (5,3) {};

    \node[vert, label=below:$\scriptstyle{010101101101010}$] (9) at (4,0) {};
    \node[vert, label=right:$\scriptstyle{z}$] (10) at (5,1) {};
    \node[vert, label=right:$\scriptstyle{010101101010110}$] (11) at (6,2) {};

    \node[vert, label=below:$\scriptstyle{010101011011010}$] (12) at (6,0) {};
    \node[vert, label=right:$\scriptstyle{010101011010110}$] (13) at (7,1) {};

    \node[vert, label=below:$\scriptstyle{010101010110110}$] (14) at (8,0) {};
    
    \draw (0) -- (1) -- (2) -- (3) -- (4) -- (8) -- (11) -- (13) -- (14);
    \draw (5) -- (6) -- (7) -- (10) -- (12);
    \draw (1) -- (5) -- (6) -- (9) -- (10) -- (12) -- (13);
    \draw (2) -- (6);
    \draw (10) -- (11);
    \draw (3) -- (7) -- (8);

    \node (xyz) at (4,-1) {$\scriptstyle{x=010110101101010,\; y=010110101011010, \; z=010101101011010}$};

    \node (l1) at (4,-2) {$\pad{15}{8}$};

    \node[vert, label=above:$\scriptstyle{011011011011010}$] (15) at (3,6) {};
    \node[vert, label=below:$\scriptstyle{011011011010110}$] (16) at (4.2,6) {};
    \node[vert, label=above:$\scriptstyle{01101010110110}$] (17) at (5.4,6) {};
    \node[vert, label=below:$\scriptstyle{011010110110110}$] (18) at (6.8,6) {};
    \node[vert, label=above:$\scriptstyle{010110110110110}$] (19) at (8,6) {};

    \draw (15) -- (16) -- (17) -- (18) -- (19);
    
    \node (l2) at (6,7) {$\pad{15}{9}$};

    \node[vert, label=above:$\scriptstyle{010101010101010}$] (20) at (0,6) {};
    
    \node (l3) at (0,7) {$\pad{15}{7}$};
    
      \end{tikzpicture}
    \caption{All weighted Padovan graphs for $n = 15$.}
    \label{fig:padovan-15}
\end{figure}

\begin{definition}
    If $p$ and $q$ are nonnegative integers, then the graph $A_{p,q}$ is defined as follows. The vertex set of $A_{p,q}$ consists of all words of length $p+q$ over the alphabet $\{a,b\}$ which contain $p$ letters $a$ and $q$ letters $b$. Two vertices (alias words) are adjacent if one can be obtained from the other by changing a subword $ab$ to $ba$. 
\end{definition}

A \emph{weak partition} of $n \geq 0$ is a sequence of integers $\lambda = (\lambda_1, \ldots, \lambda_k)$ such that $\sum_{i = 1}^k \lambda_i = n$ and $\lambda_1 \geq \cdots \geq \lambda_k \geq 0$.  Terms $\lambda_1, \ldots, \lambda_k$ of $\lambda$ are called \emph{parts}, and $k$ is the number of parts of $\lambda$. Note that this is different from the usual definition of partition where only non-zero terms are considered to be parts of the weak partition. Alternatively, if $\lambda$ has $\alpha_i$ parts of size $i$, then it can be written as $\langle 0^{\alpha_0}, 1^{\alpha_1}, 2^{\alpha_2}, \ldots \rangle$. The number of all weak partitions of $n$ into $k$ parts with the largest part at most $j$ is denoted by $p(j,k,n)$. More on (weak) partitions can be found for example in~\cite{stanley}.

\begin{definition}
    If $p$ and $q$ are nonnegative integers, then the graph $\Pi_{p,q}$ is defined as follows. The vertex set of $\Pi_{p,q}$ consists of all weak partitions of $0, \ldots, pq$ into $q$ parts with the largest part of size at most $p$. (Recall that parts can be of size $0$ as well.) Two vertices (alias weak partitions) are adjacent if one can be obtained from the other by adding $1$ to one of the parts.
\end{definition}

\begin{theorem}
\label{thm:isomorphisms}
If  $n \geq 1$ and $\left \lfloor \frac{n}{2} \right \rfloor \leq k \leq \left \lfloor \frac{2n-2}{3} \right \rfloor$, then 
$$\pad{n}{k} \cong A_{2n-3k-2, 2k-n+1} \cong \Pi_{2n-3k-2, 2k-n+1}\,.$$
\end{theorem}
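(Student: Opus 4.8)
The plan is to exhibit two explicit bijections, $\phi\colon V(\pad{n}{k}) \to V(A_{p,q})$ and $\psi\colon V(A_{p,q}) \to V(\Pi_{p,q})$, where $p = 2n-3k-2$ and $q = 2k-n+1$, and to verify that each preserves adjacency in both directions.

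First I would set up a block decomposition of Padovan words. Since such a word $w$ of length $n$ with $k$ ones starts and ends with $0$ and contains no $00$, its $n-k$ zeros are separated by nonempty runs of $1$s, and the no-$111$ condition forces each run to have length $1$ or $2$. Hence $w$ is uniquely $0 B_1 0 B_2 0 \cdots 0 B_m 0$ with $m = n-k-1$ blocks, each $B_i \in \{1,11\}$. I define $\phi(w) = c_1\cdots c_m$ over $\{a,b\}$ by $c_i = a$ if $B_i = 1$ and $c_i = b$ if $B_i = 11$. Counting ones gives $2\cdot(\#\{b\}) + (\#\{a\}) = k$ with $\#\{a\}+\#\{b\} = m$, whence the number of blocks $11$ equals $q = 2k-n+1$ and the number of blocks $1$ equals $p = 2n-3k-2$; the hypotheses $\lfloor n/2\rfloor \le k \le \lfloor (2n-2)/3\rfloor$ are exactly the conditions $p,q \ge 0$. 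So $\phi(w) \in V(A_{p,q})$, and uniqueness of the decomposition makes $\phi$ a bijection.

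The adjacency check is the technical heart. I would enumerate the occurrences of the subword $01$ in $w$: each is the $0$ immediately preceding some block together with the first $1$ of that block. Swapping the \emph{initial} $01$ produces a word beginning with $1$, hence not Padovan, so it yields no edge. For a $01$ at the separating $0$ between $B_i$ and $B_{i+1}$, the swap appends a $1$ to $B_i$ and removes the leading $1$ of $B_{i+1}$; this stays Padovan exactly when $B_i = 1$ and $B_{i+1} = 11$, i.e.\ when $(c_i,c_{i+1}) = (a,b)$, producing $(b,a)$. Thus the admissible swaps of $w$ correspond bijectively to the $ab\to ba$ swaps of $\phi(w)$, and since the relation is symmetric this gives $\pad{n}{k} \cong A_{p,q}$. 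I expect the main obstacle to lie precisely here: one must confirm that no other $01\to10$ swap stays inside the vertex set (ruling out the initial swap and any swap creating $00$ or $111$), so that $\phi$ neither loses nor creates edges.

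For the second isomorphism I would use the classical word--partition correspondence. Given $c_1\cdots c_{p+q} \in V(A_{p,q})$, read its $b$'s left to right as $b^{(1)},\ldots,b^{(q)}$ and let $\mu_j$ be the number of $a$'s to the right of $b^{(j)}$; then $p \ge \mu_1 \ge \cdots \ge \mu_q \ge 0$, so $\psi(c_1\cdots c_{p+q}) = (\mu_1,\ldots,\mu_q)$ is a weak partition into $q$ parts with largest part at most $p$, and every such partition arises uniquely, giving a bijection onto $V(\Pi_{p,q})$. A swap $ab\to ba$ moves one $b^{(j)}$ left past a single $a$, raising $\mu_j$ by exactly $1$ and fixing the other parts; conversely, adding $1$ to a part $\mu_j$ stays weakly decreasing and bounded by $p$ precisely when there is an $a$ immediately to the left of $b^{(j)}$, i.e.\ precisely when the corresponding $ab\to ba$ swap is available. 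Hence $\psi$ is a graph isomorphism, $A_{p,q} \cong \Pi_{p,q}$, and the two displayed isomorphisms follow. This last step and the parameter count are comparatively routine.
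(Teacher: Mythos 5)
Your proof is correct and follows essentially the same route as the paper's: the block encoding $1\mapsto a$, $11\mapsto b$ is exactly the paper's map $\alpha$ (stated there as a left-to-right replacement of $01$ by $a$ and $011$ by $b$), and your partition map $\psi$ (number of $a$'s to the right of each $b$) coincides with the paper's $\beta(u)=((p+1)-i_1,\ldots,(p+q)-i_q)$. If anything, your case analysis of which $01\to 10$ swaps keep the word Padovan is spelled out more explicitly than in the paper, which is a welcome addition rather than a deviation.
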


\proof
We define $\alpha: V(\pad{n}{k}) \rightarrow V(A_{2n-3k-2, 2k-n+1})$ as follows. Let $u\in V(\pad{n}{k})$. Then $\alpha(u)$ is a obtained from $u$ by replacing from left to right each $011$ by $b$, each $01$ by $a$, and removing the ending $0$. If $k_a$ and $k_b$ are the respective numbers of $a$s and $b$s in $\alpha(u)$, then the number of $1$s in $u$ is $k_a + 2 k_b$, that is, $k = k_a + 2 k_b$. Moreover, $2 k_a + 3 k_b = n-1$. From these two equations we obtain that $k_a = 2n - 3k - 2$ and $k_b = 2k - n +1$ which implies that $\alpha$ maps vertices of $\pad{n}{k}$ to vertices of $A_{2n-3k-2, 2k-n+1}$. Moreover, it is straightforward to check that $\alpha$ is a bijection. 

Let $uv\in E(\pad{n}{k})$. Then we may assume without loss of generality that $u = \ldots 01\ldots $ and $v = \ldots 10\ldots $, where $``\ldots"$ means that $u$ and $v$ coincide in all the other positions. By the definition of Padovan words we next infer that actually $u = \ldots 010110\ldots $ and $v = \ldots 011010\ldots $. This in turn implies that $\alpha(v)$ is obtained from $\alpha(u)$ by changing exactly one subword $ab$ to $ba$. So $\alpha$ maps edges to edges. Moreover, by the same argument as above we also see that $\alpha$ maps the vertices of $N_{\pad{n}{k}}(u)$ to the vertices of $N_{A_{2n-3k-2, 2k-n+1}}(\alpha(u))$. We may conclude that $\alpha$  is an isomorphism. 

Let $p = 2n - 3k - 2$ and $q = 2k - n +1$. We will show that $A_{p,q} \cong \Pi_{p,q}$. Let $\beta \colon V(A_{p,q}) \to V(\Pi_{p,q })$ be as follows. For $u \in V(A_{p,q})$, let $1 \leq i_1 \leq \cdots \leq i_q \leq p+q$ be positions of $b$s in $u$. Then $\beta(u)$ is the weak partition $((p+1) - i_1, \ldots, (p+q) - i_q)$. Since $u$ contains $q$ $b$s and is of length $p+q$, it clearly holds that $m \leq i_m \leq p+m$, thus all parts are between $0$ and $p$, and the sum of all parts is at most $pq$. Thus $\beta$ is well-defined and it is easy to see that it is a bijection.

Let $uv \in E(A_{p,q})$. Then we may assume without loss of generality that $u = \ldots ab \ldots $ and $v = \ldots ba\ldots $, where again $``\ldots"$ means that $u$ and $v$ coincide in all the other positions. This means that $v$ is obtained from $u$ by subtracting one from the position of one $b$. So $\beta(v)$ is obtained from $\beta(u)$ by adding $1$ to one part of the weak partition. Thus $\beta$ maps edges to edges. Moreover, by the same argument as above we also see that $\beta$ maps the vertices of $N_{A_{p,q}}(u)$ to the vertices of $N_{\Pi_{p,q}}(\beta(u))$, so $\beta$ is an isomorphism. 
\qed

%%%%%%%%%%%%%%%%%%%%%%%%%%%%%%%%%%%%%
\section{Order, size, degree}
\label{sec:order-size-degree}
%%%%%%%%%%%%%%%%%%%%%%%%%%%%%%%%%%%%%

In this section we determine the order, the size, and the degree of weighted Padovan graphs. Before that, their fundamental decomposition is described. 

The vertices of $\pad{n}{k}$ can be partitioned into those starting with $010$ and those starting with $011$. Let $X$ and $Y$ be the respective sets of vertices. Then $\pad{n}{k}[X] \cong \pad{n-2}{k-1}$, where $\pad{n}{k}[X]$ denotes the subgraph of $\pad{n}{k}$ induced by $X$. Moreover, since the vertices from $Y$ start by $0110$ we also see that $\pad{n}{k}[Y] \cong \pad{n-3}{k-2}$. Now, a vertex $u\in X$ has a neighbor in $Y$ if any only if  $u=010110\ldots$ in which case its neighbor from $Y$ is $011010\ldots$. Thus the vertices in $X$ that have an edge to $Y$ (and vice versa) induce $\pad{n-5}{k-3}$. The structure of $\pad{n}{k}$ as just described will be called the {\em fundamental decomposition} of $\pad{n}{k}$ and shortly denoted as 
\begin{equation*}
\pad{n}{k} = \mathbf{01}\pad{n-2}{k-1} + \mathbf{011}\pad{n-3}{k-2}\,.
\end{equation*}
For an example see Fig.~\ref{fig:pad-n-k},
where the fundamental decomposition $\pad{18}{10} = \mathbf{01}\pad{16}{9} + \mathbf{011}\pad{15}{8}$ is illustrated. To make the drawing clearer, the vertices are labeled via the isomorphism $\pad{18}{10} \cong A_{4,3}$. For example, this isomorphism maps the vertex $01\,011\,011\,011\,01\,01\,01\,0$ of $\pad{18}{10}$ to the vertex $abbbaaa$ of $A_{4,3}$. We thus have 
$$A_{4,3} \cong \pad{18}{10} = \mathbf{01}\pad{16}{9} + \mathbf{011}\pad{15}{8} \cong \mathbf{a}A_{3,3} + \mathbf{b}A_{4,2}\,.$$

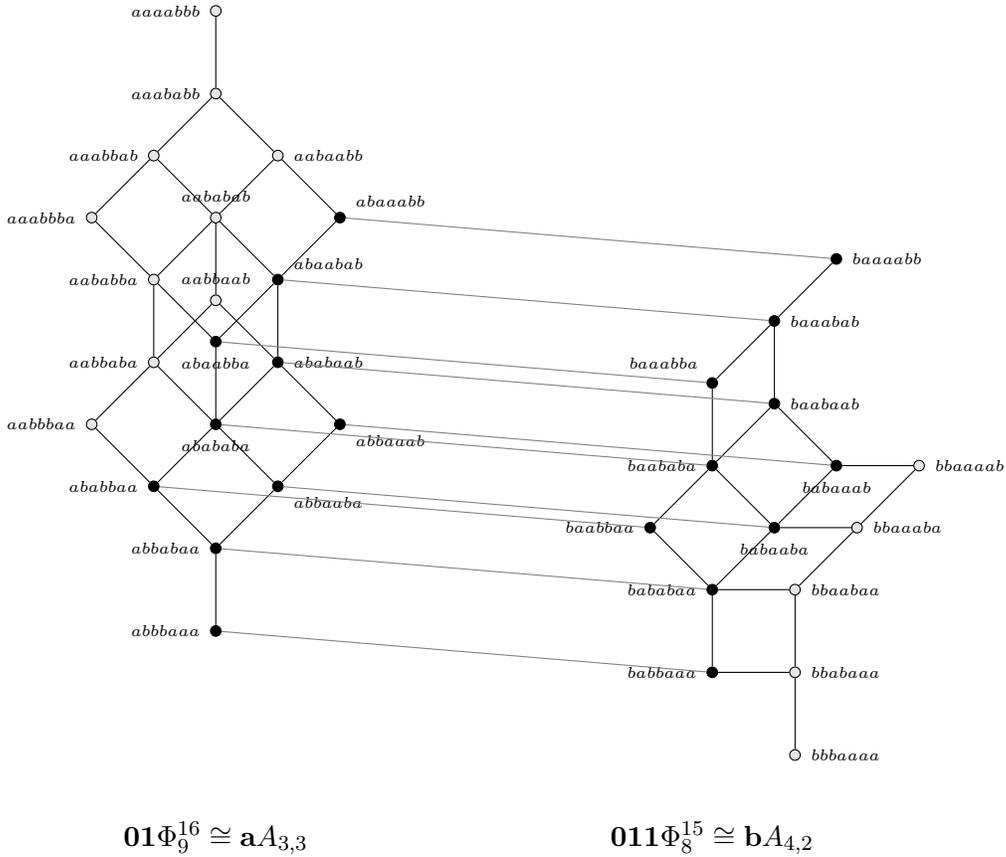
\begin{figure}[ht!]
    \centering
    \begin{tikzpicture}
    [scale=1.1,
    vert/.style={circle, draw, fill=black!10, inner sep=0pt, minimum width=4pt}, 
    double/.style={circle, draw, fill=black, inner sep=0pt, minimum width=4pt}, 
    central/.style={circle, draw, fill=black, inner sep=0pt, minimum width=4pt},
    ]

    \node[double, label=left:$\scriptscriptstyle{abbbaaa}$] (0) at (0,0) {};
    \node[double, label=left:$\scriptscriptstyle{abbabaa}$] (1) at (0,1) {};
    \node[double, label=left:$\scriptscriptstyle{ababbaa}$] (2) at (-0.75,1.75) {};
    \node[double, label=-5:$\scriptscriptstyle{abbaaba}$] (3) at (0.75,1.75) {};
    \node[vert, label=left:$\scriptscriptstyle{aabbbaa}$] (4) at (-1.5,2.5) {};
    \node[double, label=below:$\scriptscriptstyle{abababa}$] (5) at (0,2.5) {};
    \node[double, label=-5:$\scriptscriptstyle{abbaaab}$] (6) at (1.5,2.5) {};
    \node[vert, label=left:$\scriptscriptstyle{aabbaba}$] (7) at (-0.75,3.25) {};
    \node[double, label=right:$\scriptscriptstyle{ababaab}$] (8) at (0.75,3.25) {};
    \node[double, label=below:$\scriptscriptstyle{abaabba}$] (9) at (0,3.5) {};
    \node[vert, label=above:$\scriptscriptstyle{aabbaab}$] (10) at (0,4) {};
    \node[vert, label=left:$\scriptscriptstyle{aababba}$] (11) at (-0.75,4.25) {};
    \node[double, label={5:$\scriptscriptstyle{abaabab}$}] (12) at (0.75,4.25) {};
    \node[vert, label=left:$\scriptscriptstyle{aaabbba}$] (13) at (-1.5,5) {};
    \node[vert, label=above:$\scriptscriptstyle{aababab}$] (14) at (0,5) {};
    \node[double, label={5:$\scriptscriptstyle{abaaabb}$}] (15) at (1.5,5) {};
    \node[vert, label=left:$\scriptscriptstyle{aaabbab}$] (16) at (-0.75,5.75) {};
    \node[vert, label=right:$\scriptscriptstyle{aabaabb}$] (17) at (0.75,5.75) {};
    \node[vert, label=left:$\scriptscriptstyle{aaababb}$] (18) at (0,6.5) {};
    \node[vert, label=left:$\scriptscriptstyle{aaaabbb}$] (19) at (0,7.5) {};

    \draw (0) -- (1) -- (2) -- (4) -- (7) -- (10) -- (14) -- (16) -- (18) -- (19);
    \draw (1) -- (3) -- (6) -- (8) -- (12) -- (15) -- (17) -- (18);
    \draw (2) -- (5) -- (7) -- (11) -- (13) -- (16);
    \draw (3) -- (5) -- (8) -- (10);
    \draw (5) -- (9) -- (11) -- (14) -- (17);
    \draw (9) -- (12) -- (14);

    \node (p1) at (0,-2.5) {$\mathbf{01}\pad{16}{9} \cong \mathbf{a} A_{3,3}$};

    \node[double, label=left:$\scriptscriptstyle{babbaaa}$] (b0) at (6,-0.5) {};
    \node[double, label=left:$\scriptscriptstyle{bababaa}$] (b1) at (6,0.5) {};
    \node[double, label=left:$\scriptscriptstyle{baabbaa}$] (b2) at (5.25,1.25) {};
    \node[double, label=below:$\scriptscriptstyle{babaaba}$] (b3) at (6.75,1.25) {};
    \node[double, label=left:$\scriptscriptstyle{baababa}$] (b5) at (6,2) {};
    \node[double, label=below:$\scriptscriptstyle{babaaab}$] (b6) at (7.5,2) {};
    \node[double, label=right:$\scriptscriptstyle{baabaab}$] (b8) at (6.75,2.75) {};
    \node[double, label=above left:$\scriptscriptstyle{baaabba}$] (b9) at (6,3) {};
    \node[double, label=right:$\scriptscriptstyle{baaabab}$] (b12) at (6.75,3.75) {};
    \node[double, label=right:$\scriptscriptstyle{baaaabb}$] (b15) at (7.5,4.5) {};

    \node[vert, label=right:$\scriptscriptstyle{bbabaaa}$] (23) at (7,-0.5) {};
    \node[vert, label=right:$\scriptscriptstyle{bbbaaaa}$] (24) at (7,-1.5) {};
    \node[vert, label=right:$\scriptscriptstyle{bbaabaa}$] (22) at (7,0.5) {};
    \node[vert, label=right:$\scriptscriptstyle{bbaaaba}$] (21) at (7.75,1.25) {};
    \node[vert, label=right:$\scriptscriptstyle{bbaaaab}$] (20) at (8.5,2) {};
    
    \draw (b0) -- (b1) -- (b2);
    \draw (b1) -- (b3) -- (b6) -- (b8) -- (b12) -- (b15);
    \draw (b2) -- (b5);
    \draw (b3) -- (b5) -- (b8);
    \draw (b5) -- (b9);
    \draw (b9) -- (b12);

    \draw (24) -- (23) -- (22) -- (21) -- (20) -- (b6);
    \draw (23) -- (b0);
    \draw (22) -- (b1);
    \draw (21) -- (b3);

    \path[gray] (0) edge (b0);
    \path[gray] (1) edge (b1);
    \path[gray] (2) edge (b2);
    \path[gray] (3) edge (b3);
    \path[gray] (5) edge (b5);
    \path[gray] (6) edge (b6);
    \path[gray] (8) edge (b8);
    \path[gray] (9) edge (b9);
    \path[gray] (12) edge (b12);
    \path[gray] (15) edge (b15);

    \node (p2) at (6,-2.5) {$\mathbf{011}\pad{15}{8} \cong \mathbf{b} A_{4,2}$};

    \end{tikzpicture}
    \caption{The fundamental decomposition of $\pad{18}{10}$.}
    \label{fig:pad-n-k}
\end{figure}

\begin{proposition}
\label{prop:order-size}
If  $n \geq 1$ and $\left \lfloor \frac{n}{2} \right \rfloor \leq k \leq \left \lfloor \frac{2n-2}{3} \right \rfloor$, then 
$$|V(\pad{n}{k})| = \binom{n-k-1}{2n-3k-2}$$
and
$$|E(\pad{n}{k})| = |E(\pad{n-2}{k-1})| + |E(\pad{n-3}{k-2})| + |V(\pad{n-5}{k-3})|\,.$$
\end{proposition}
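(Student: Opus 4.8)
The plan is to treat the two formulas separately, using Theorem~\ref{thm:isomorphisms} for the order and the fundamental decomposition for the size.

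For the order, I would invoke the isomorphism $\pad{n}{k} \cong A_{2n-3k-2,\,2k-n+1}$. The vertices of $A_{p,q}$ are precisely the words of length $p+q$ over $\{a,b\}$ with $p$ letters $a$, so $|V(A_{p,q})| = \binom{p+q}{p}$. Substituting $p = 2n-3k-2$ and $q = 2k-n+1$ gives $p+q = n-k-1$, whence $|V(\pad{n}{k})| = \binom{n-k-1}{2n-3k-2}$, as claimed. As a consistency check one may instead argue inductively: the fundamental decomposition yields $|V(\pad{n}{k})| = |V(\pad{n-2}{k-1})| + |V(\pad{n-3}{k-2})|$, and the two summands are $\binom{n-k-2}{2n-3k-3}$ and $\binom{n-k-2}{2n-3k-2}$, which combine to $\binom{n-k-1}{2n-3k-2}$ by Pascal's rule.

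For the size, the plan is to read off the three terms directly from the fundamental decomposition $\pad{n}{k} = \mathbf{01}\pad{n-2}{k-1} + \mathbf{011}\pad{n-3}{k-2}$. I would partition $E(\pad{n}{k})$ according to whether an edge lies within $X$, within $Y$, or between $X$ and $Y$. Since $\pad{n}{k}[X] \cong \pad{n-2}{k-1}$ and $\pad{n}{k}[Y] \cong \pad{n-3}{k-2}$, the first two classes contribute exactly $|E(\pad{n-2}{k-1})|$ and $|E(\pad{n-3}{k-2})|$, so it only remains to count the cross edges. For these, I would exhibit a bijection with $V(\pad{n-5}{k-3})$: a vertex $u \in X$ can reach $Y$ by a single $01 \to 10$ swap only by turning its prefix $010$ into $011$, i.e. by the swap at positions $3,4$, and requiring both $u$ and its image to be Padovan forces $u = 010110\ldots$ with unique $Y$-neighbor $011010\ldots$. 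Deleting the fixed prefix $01011$ (which carries three $1$s) sends such a $u$ to the suffix $u_6\cdots u_n$, a Padovan word of length $n-5$ with $k-3$ ones; this map is a bijection onto $V(\pad{n-5}{k-3})$, and each such $u$ accounts for exactly one cross edge. Summing the three contributions yields the stated recurrence.

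The routine part is setting up the bijections; the only point demanding care is the cross-edge count, specifically verifying that each eligible $u$ has precisely one neighbor in $Y$ (so that no vertex yields two edges and distinct vertices yield distinct edges). This is exactly where the no-$00$ and no-$111$ Padovan constraints are used to pin the prefix down to $010110$, and it is the step I expect to be the main obstacle to write cleanly.
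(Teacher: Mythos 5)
Your proposal is correct and follows essentially the same route as the paper: the order formula comes from the isomorphism $\pad{n}{k} \cong A_{2n-3k-2,\,2k-n+1}$ of Theorem~\ref{thm:isomorphisms}, and the edge recurrence from the fundamental decomposition, with the cross edges $010110x \sim 011010x$ in bijection with $V(\pad{n-5}{k-3})$. Your careful verification that the Padovan constraints force the prefix $010110$ (hence exactly one cross edge per eligible vertex) is precisely the argument the paper uses when establishing the fundamental decomposition.
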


\proof
By Theorem~\ref{thm:isomorphisms} we have $\pad{n}{k} \cong A_{2n-3k-2, 2k-n+1}$. For the latter graph it is clear that $|V(A_{2n-3k-2, 2k-n+1})| = \binom{n-k-1}{2n-3k-2}$ because its vertices are words of length $(2n-3k-2) + (2k-n+1) = n-k-1$. 

By the fundamental decomposition, the vertices of the graph $\pad{n}{k}$ can be decomposed into $\mathbf{01} \pad{n-2}{k-1}$ and $\mathbf{011} \pad{n-3}{k-2}$. The only edges between these two sets are of the form $010110x \sim 011010x$, where $0x \in V(\pad{n-5}{k-3})$. From here the claimed formula for $|E(\pad{n}{k})|$ follows. 
\qed

\begin{theorem}
    \label{thm:number-edges}
    If $p \geq 1$ and $q \geq 1$, then $|E(A_{p,q})| = q \binom{p+q-1}{p-1}$, and if $pq=0$, then $|E(A_{p,q})| = 0$.
\end{theorem}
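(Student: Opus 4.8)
The plan is to count the edges of $A_{p,q}$ by a direct double-counting argument, orienting each edge from its $ab$-endpoint to its $ba$-endpoint. First I would dispose of the degenerate case: if $pq = 0$, say $q = 0$, then the unique vertex is the all-$a$ word, which contains no subword $ab$, so no swap is possible and $|E(A_{p,q})| = 0$; the case $p = 0$ is symmetric. So from now on assume $p, q \geq 1$.

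For the main case the key observation is that every edge can be encoded by the position of the swapped subword. I would set up a bijection between $E(A_{p,q})$ and the set of pairs $(u, i)$, where $u \in V(A_{p,q})$ and $i \in \{1, \ldots, p+q-1\}$ is a position at which $u$ has an $a$ immediately followed by a $b$. Indeed, each such pair determines a unique neighbor of $u$ (swap those two letters), and conversely every edge $\{u,v\}$ has exactly one endpoint of the form $\ldots ab \ldots$ together with a unique differing position, which recovers the pair; distinct pairs yield distinct edges since swapping at different positions of a fixed word produces different words. Hence $|E(A_{p,q})|$ equals the total number of occurrences of the subword $ab$ over all vertices.

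I would then evaluate this quantity by summing over positions rather than over words. Fix a position $i$; the number of words having $a$ at position $i$ and $b$ at position $i+1$ equals the number of arrangements of the remaining $p-1$ letters $a$ and $q-1$ letters $b$ among the other $p+q-2$ positions, namely $\binom{p+q-2}{p-1}$. Since there are $p+q-1$ admissible positions, this gives
$$|E(A_{p,q})| = (p+q-1)\binom{p+q-2}{p-1}.$$
A routine binomial manipulation, namely $(p+q-1)\binom{p+q-2}{p-1} = \frac{(p+q-1)!}{(p-1)!\,(q-1)!} = q\binom{p+q-1}{p-1}$, yields the claimed formula.

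There is no serious obstacle here; the only point deserving care is the verification that the correspondence between edges and $ab$-occurrences is a genuine bijection, that is, that each edge's oriented endpoint and its differing position are uniquely determined and that distinct $(u,i)$ give distinct edges. Once this is established, the remainder is an elementary count. (One could alternatively argue through the isomorphism $A_{p,q} \cong \Pi_{p,q}$ from Theorem~\ref{thm:isomorphisms}, counting instead the pairs consisting of a weak partition and a part to which $1$ may be added, but the word model above makes the bijection most transparent.)
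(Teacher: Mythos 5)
Your proof is correct, but it follows a different route from the one the paper uses to prove this theorem. The paper proceeds by induction: after the trivial cases ($pq=0$, and $p=1$ or $q=1$, where $A_{p,q}$ is a path), it invokes the isomorphism $A_{p,q} \cong \pad{2p+3q+1}{p+2q}$ of Theorem~\ref{thm:isomorphisms} and the fundamental decomposition (via Proposition~\ref{prop:order-size}) to get the recurrence $|E(A_{p,q})| = |E(A_{p-1,q})| + |E(A_{p,q-1})| + |V(A_{p-1,q-1})|$, and then closes the induction with the same binomial algebra you perform. Your argument instead double-counts directly: you identify edges with pairs $(u,i)$ where $u$ has an occurrence of $ab$ at position $i$, and sum over positions to get $(p+q-1)\binom{p+q-2}{p-1} = q\binom{p+q-1}{p-1}$. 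This is more elementary and self-contained --- it needs neither the isomorphism to $\pad{n}{k}$ nor any recursion --- while the paper's induction has the virtue of recycling the fundamental decomposition machinery it develops anyway. It is worth noting that the paper itself records your idea immediately afterwards as an \emph{alternative} proof: the theorem following this one encodes each edge $xaby \sim xbay$ as the word $xcy$ over $\{a,b,c\}$, which is precisely your pair $(u,i)$ in disguise (the letter $c$ marks the swap position); the paper then counts these words as multiset permutations in one shot, $\frac{(p+q-1)!}{(p-1)!\,(q-1)!}$, whereas you first fix the position of the swap and then arrange the remaining letters. Your only delicate point --- that the edge-to-pair correspondence is a genuine bijection because the two endpoints of an edge differ in exactly two consecutive positions, with exactly one endpoint showing $ab$ there --- is handled correctly.
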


\begin{proof}
    If $p=0$ or $q=0$, $A_{p,q}$ contains at most one vertex, and has no edges.

    If $p=1$, then $V(A_{p,q}) = \{a b^{q}, bab^{q-1}, \ldots, b^q a\}$, and $A_{1,q}$ is isomorphic to a path on $q+1$ vertices and has $q = q \binom{1+q-1}{0}$ edges. Analogously, the formula holds if $q = 1$. 

    Suppose now that $p \geq 2$ and $q \geq 2$. It follows from Theorem \ref{thm:isomorphisms} that $A_{p,q} \cong \pad{2p+3q+1}{p+2q}$, and from Proposition \ref{prop:order-size} that $|E(A_{p,q})| = |E(A_{p-1,q})| + |E(A_{p,q-1})| + |V(A_{p-1,q-1})|$. As $p,q \geq 2$, $p-1, q-1 \geq 1$, we can use the induction hypothesis to obtain
    \begin{align*}
        |E(A_{p,q})| & = q \binom{p+q-2}{p-2} + (q-1) \binom{p+q-2}{p-1} + \binom{p+q-2}{p-1}\\
        & = \frac{(p+q-2)!}{(p-1)! (q-1)!} \left( p-1 + q-1 + 1 \right)\\
        & = \frac{(p+q-1)!}{(p-1)! (q-1)!} = q \binom{p+q-1}{p-1}. \qedhere
    \end{align*}
\end{proof}

\begin{corollary}
    \label{cor:edges-padovan}
    If $n \geq 1$ and $\left \lfloor \frac{n}{2} \right \rfloor \leq k \leq \left \lfloor \frac{2n-2}{3} \right \rfloor$, then $$
    |E(\pad{n}{k})| = \begin{cases} 
        0; & k \in \{ \frac{n-1}{2}, \frac{2n-2}{3} \},\\
        (2k-n+1) \binom{n-k-2}{2n-3k-3}; & \text{otherwise}.
    \end{cases}$$
\end{corollary}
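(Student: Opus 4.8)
The plan is to derive this directly from the isomorphism $\pad{n}{k} \cong A_{p,q}$ of Theorem~\ref{thm:isomorphisms}, where $p = 2n-3k-2$ and $q = 2k-n+1$, together with the edge count of $A_{p,q}$ established in Theorem~\ref{thm:number-edges}. There is no new combinatorial content here; the entire task is to translate the two regimes of Theorem~\ref{thm:number-edges} (namely $pq = 0$ versus $p,q \geq 1$) into the corresponding conditions on $k$ and then to rewrite the binomial coefficient in terms of $n$ and $k$.

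First I would record the three elementary identities obtained by substituting $p = 2n-3k-2$ and $q = 2k-n+1$:
$$q = 2k-n+1, \qquad p-1 = 2n-3k-3, \qquad p+q-1 = n-k-2.$$
These are exactly the three quantities appearing in the formula $q\binom{p+q-1}{p-1}$ of Theorem~\ref{thm:number-edges}, so once the correct regime is identified the ``otherwise'' formula drops out immediately.

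Next I would pin down exactly when $pq = 0$. Since the admissible range $\left\lfloor \frac{n}{2}\right\rfloor \leq k \leq \left\lfloor \frac{2n-2}{3}\right\rfloor$ forces $k \geq \frac{n-1}{2}$ and $k \leq \frac{2n-2}{3}$, we always have $p \geq 0$ and $q \geq 0$. Moreover $q = 0$ holds precisely when $k = \frac{n-1}{2}$, and $p = 0$ holds precisely when $k = \frac{2n-2}{3}$ (each case of course requiring the relevant quantity to be an integer). Hence $pq = 0$ occurs exactly for $k \in \left\{\frac{n-1}{2}, \frac{2n-2}{3}\right\}$, and for such $k$ Theorem~\ref{thm:number-edges} yields $|E(\pad{n}{k})| = |E(A_{p,q})| = 0$, matching the first line of the claimed formula.

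For every remaining admissible value of $k$ we then have $p \geq 1$ and $q \geq 1$, so Theorem~\ref{thm:number-edges} gives $|E(A_{p,q})| = q\binom{p+q-1}{p-1}$; plugging in the three identities above produces $(2k-n+1)\binom{n-k-2}{2n-3k-3}$, which is the ``otherwise'' case. The only point demanding a little care is the boundary analysis—verifying that within the stated range the nonnegativity conditions $p,q \geq 0$ hold automatically and that the two vanishing cases coincide exactly with the listed values of $k$—but this is routine, so I anticipate no genuine obstacle.
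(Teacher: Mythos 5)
Your proposal is correct and is exactly the argument the paper intends: the corollary is stated as an immediate consequence of Theorem~\ref{thm:number-edges} via the isomorphism $\pad{n}{k} \cong A_{2n-3k-2,\,2k-n+1}$ of Theorem~\ref{thm:isomorphisms}, with the substitutions $q = 2k-n+1$, $p-1 = 2n-3k-3$, $p+q-1 = n-k-2$ and the observation that $pq=0$ precisely when $k \in \left\{ \frac{n-1}{2}, \frac{2n-2}{3} \right\}$. Your boundary analysis matches what the paper leaves implicit, so there is nothing to add.
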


Inspired by a comment in~\cite[Sequence A002457]{OEIS}  by Hans Haverman, we next give a combinatorial interpretation of the number of edges of weighted Padovan graphs which in turn provides an alternative proof of Theorem~\ref{thm:number-edges}.

\begin{theorem}
Let $p,q \geq 1$. Then the number of edges in $A_{p,q}$ is the same as the number of different words with $p-1$ letters $a$, $q-1$ letters $b$, and one letter $c$.
\end{theorem}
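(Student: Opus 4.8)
The plan is to exhibit an explicit bijection between the edge set $E(A_{p,q})$ and the set of words with $p-1$ letters $a$, $q-1$ letters $b$, and exactly one letter $c$. First I would observe that such words are counted by the multinomial $\binom{(p-1)+(q-1)+1}{p-1,\,q-1,\,1} = \frac{(p+q-1)!}{(p-1)!(q-1)!\,1!} = q\binom{p+q-1}{p-1}$, which matches the formula for $|E(A_{p,q})|$ from Theorem~\ref{thm:number-edges}; so it suffices to produce the bijection, and this will simultaneously give the advertised alternative proof of that theorem.

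The natural map goes as follows. Recall that an edge of $A_{p,q}$ is a pair $\{u,v\}$ where $u$ and $v$ agree everywhere except at two consecutive positions, where $u$ has $ab$ and $v$ has $ba$. Such an edge is therefore determined by a single word of length $p+q$ together with a distinguished $ab$-descent location; equivalently, take the word $u$ (the endpoint containing $ab$ at the switched spot) and replace that particular subword $ab$ by the single symbol $c$. This yields a word of length $p+q-1$ containing $p-1$ letters $a$, $q-1$ letters $b$, and one $c$. I would define $\varphi(\{u,v\})$ to be this word and check it lands in the target set, which is immediate from the letter count.

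To show $\varphi$ is a bijection I would construct its inverse: given a word $w$ with one $c$, replace the unique $c$ by the subword $ab$ to recover a length-$(p+q)$ word $u$ with $p$ letters $a$ and $q$ letters $b$, in which the reinstated $ab$ occupies two consecutive positions; swapping those gives $v$, and $\{u,v\}$ is an edge of $A_{p,q}$. The key point to verify is that these two constructions are mutually inverse, which is essentially formal: inserting $ab$ for $c$ and then deleting it returns $w$, and conversely the position of the $c$ records exactly which $ab\to ba$ switch produced the edge, so no two distinct edges can map to the same word and every target word is hit.

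The only genuine subtlety, and the step I expect to be the main obstacle, is well-definedness in the presence of longer runs of the pattern: a single word can contain several adjacent $ab$ subwords (for instance $aabb$ has overlapping occurrences), so I must confirm that an edge is recorded by its \emph{specific} switched position rather than merely by the pair of multisets, and that contracting $ab$ to $c$ at that exact position is unambiguous. Concretely, an edge is a pair of words differing in precisely two adjacent coordinates, so the location of the disagreement is canonically determined by the pair $\{u,v\}$ itself; inserting $c$ there is therefore well-defined, and conversely the position of $c$ in $w$ pins down the coordinates to be swapped. Once this bookkeeping is made explicit, the bijection is complete and the edge count follows, reproving Theorem~\ref{thm:number-edges}.
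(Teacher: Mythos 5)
Your proposal is correct and is essentially the paper's own proof: both arguments encode an edge $\{xaby,\,xbay\}$ as the word $xcy$ obtained by contracting the switched $ab$ to a single letter $c$, and invert by expanding $c$ back to $ab$/$ba$. Your extra care about well-definedness (the swapped position being canonically determined by the pair of words) is a point the paper leaves implicit, but the route is the same.
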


\begin{proof}
Recall that two vertices in $A_{p,q}$ are adjacent if one can be obtained from the other by changing one subword $ab$ to $ba$. Now every edge $x ab y \sim_{A_{p,q}} x ba y$ can be represented by $xcy$: the shared part in the beginning, then letter $c$, then shared part at the end (word representing the edge can, of course, also begin or end with $c$). It is easy to see that this representation yields a bijection between $E(A_{p,q})$ and the set of all different words with $p-1$ $a$s, $q-1$ $b$s and one $c$.
\end{proof}

\begin{proposition}
    \label{prop:degrees}
    If $p, q \geq 0$ and $p+q \geq 1$, then $$\delta(A_{p,q}) = \min\{1, pq\}$$ and $$\Delta(A_{p,q}) = \begin{cases}
        2 \min\{p,q\}; & p \neq q,\\
        2p-1; & p = q.
    \end{cases}$$
    Moreover, the number of vertices of degree $d$ in $A_{p,q}$ is equal to $$\begin{cases}
        \displaystyle{
        2 \left( \binom{p-1}{\frac{d-1}{2}} \binom{q-1}{\frac{d-1}{2}} \right);} & d \text{ is odd}, \\ \\ 
        \displaystyle{
        \binom{p-1}{\frac{d}{2} - 1} \binom{q-1}{\frac{d}{2}} + \binom{p-1}{\frac{d}{2}} \binom{q-1}{\frac{d}{2} - 1};} & d \text{ is even}.
    \end{cases}$$
\end{proposition}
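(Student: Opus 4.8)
The plan is to work directly with the combinatorial model $A_{p,q}$, whose vertices are words of length $p+q$ with $p$ letters $a$ and $q$ letters $b$, and whose edges swap a subword $ab$ to $ba$. The key observation is that the degree of a vertex $w$ equals the number of occurrences of the pattern $ab$ plus the number of occurrences of the pattern $ba$ in $w$, since an edge is available for each swappable adjacent pair in either direction. Equivalently, the degree counts the number of \emph{descents and ascents} between letters, i.e.\ the number of positions $i$ where $w_i \neq w_{i+1}$. Thus the degree of $w$ is exactly the number of ``boundaries'' between a maximal block of $a$s and an adjacent maximal block of $b$s. If $w$ is partitioned into maximal runs, then the degree is one less than the number of runs.

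With this reformulation, the first step is to translate the extremal quantities. The minimum degree corresponds to the word with the fewest runs: when $pq=0$ the single vertex is isolated giving $\delta=0$, and otherwise the sorted words $a^p b^q$ and $b^q a^p$ have exactly two runs, hence degree $1$, establishing $\delta(A_{p,q}) = \min\{1,pq\}$. For the maximum degree I would count the maximum possible number of runs given $p$ copies of $a$ and $q$ copies of $b$: alternating the two letters as much as possible yields at most $2\min\{p,q\}$ runs when $p \neq q$ (so degree $2\min\{p,q\} - 1$... here I must be careful, since degree equals runs minus one) and, when $p=q$, a perfectly alternating word of $2p$ runs giving degree $2p-1$. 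I would verify the off-by-one bookkeeping carefully here: the claimed $\Delta = 2\min\{p,q\}$ for $p\neq q$ must match ``number of runs minus one,'' so I need to recount whether the reformulation should be ``number of $ab$ or $ba$ boundaries'' directly rather than runs-minus-one; this boundary-counting is where I expect the main obstacle, since the parity of the answer and the two cases $p=q$ versus $p\neq q$ hinge on it.

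The heart of the proof is the counting of vertices of a fixed degree $d$. Having established that $\deg(w)$ equals the number of $i$ with $w_i\neq w_{i+1}$, I would count words by their run structure. A word is determined by the sequence of run lengths together with the choice of the starting letter. If a word has $r$ runs, then $\deg(w)=r-1$, so fixing $d$ means fixing $r=d+1$ runs. The number of ways to write such a word splits according to the starting letter and the parity of $r$: the $a$-runs and $b$-runs must have positive lengths summing to $p$ and $q$ respectively, and a composition of $p$ into $s$ positive parts is counted by $\binom{p-1}{s-1}$. I would then tabulate, for each value of $d$, how many $a$-runs and $b$-runs occur depending on whether $d$ is odd or even and on the starting letter, and sum the corresponding products of binomial coefficients. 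This yields the two cases: when $d$ is odd the two starting letters contribute symmetric equal terms giving the factor $2\binom{p-1}{(d-1)/2}\binom{q-1}{(d-1)/2}$, and when $d$ is even the two starting choices give the two distinct products summing to $\binom{p-1}{d/2-1}\binom{q-1}{d/2}+\binom{p-1}{d/2}\binom{q-1}{d/2-1}$.

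Throughout, the main delicacy — and the step I expect to consume the most care — is the interplay between ``number of runs'' and ``degree'' (the off-by-one shift) together with the alignment of which starting letter forces which run counts for each parity of $d$. Once that bookkeeping is pinned down precisely, the extremal values and the counting formula all fall out of elementary compositions-of-integers arguments, and I would double-check the final formulas against small cases, such as $A_{1,q}$ (a path, where all internal vertices have degree $2$ and the two endpoints degree $1$) and the symmetric square case $p=q$, to confirm that the closed forms reproduce the known values.
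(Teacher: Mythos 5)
Your core reformulation (the degree of a word $w$ equals the number of positions $i$ with $w_i\neq w_{i+1}$, i.e.\ one less than the number of maximal runs) and your composition-based count of degree-$d$ vertices are exactly the paper's argument: the paper writes each vertex in block form $a^{\alpha_1}b^{\beta_1}\cdots a^{\alpha_m}b^{\beta_m}$ with possibly empty first and last blocks, splits on the parity of $d$ and on the starting/ending letter, and counts compositions via $\binom{n-1}{k-1}$. That half of your proposal is correct and would reproduce the stated formulas.

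The genuine gap is the maximum degree, which you explicitly leave unresolved (``I need to recount\ldots this is where I expect the main obstacle''), and your tentative claim there is in fact wrong: alternating as much as possible when $p\neq q$ yields $2\min\{p,q\}+1$ runs, not $2\min\{p,q\}$, because the majority letter can occupy \emph{both} ends of the word. Concretely, if $p<q$ the word $b^{q-p}(ab)^p$ has one long $b$-run followed by $2p$ singleton runs, hence $2p+1$ runs and degree $2p=2\min\{p,q\}$, matching the statement; in general, with $r_a$ runs of $a$ and $r_b$ runs of $b$ one has $|r_a-r_b|\le 1$, $r_a\le p$, $r_b\le q$, so the maximum of $r_a+r_b$ is $2\min\{p,q\}+1$ when $p\neq q$ and $2p$ when $p=q$, which is precisely the source of the two cases in the proposition. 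Alternatively---and this is how the paper organizes it---you can skip any direct extremal analysis: once the number of degree-$d$ vertices is known, $\delta$ and $\Delta$ are simply the smallest and largest $d$ for which that count is nonzero (using that $\binom{n}{k}\ge 1$ iff $0\le k\le n$), and the case split $p=q$ versus $p\neq q$ falls out of the binomial constraints. Either route closes the gap, but as written your proposal asserts an incorrect run count for $p\neq q$ and defers its reconciliation, so the extremal part of the proposition is not actually proved.
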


\begin{proof}
    Let $v \in V(A_{p,q})$. Clearly, $\deg(v)$ is the sum of the number of appearances of the words $ab$ and $ba$ in $v$. Thus, to count the number of vertices of degree $d$, it suffices to count the number of words with $p$ $a$s, $q$ $b$s, and exactly $d$ subwords $ab$ and $ba$. Observe that occurrences of $ab$ and $ba$ in such a vertex alternate from left to right (and can overlap), so the vertex can be written in the form $a^{\alpha_1} b^{\beta_1} \cdots a^{\alpha_m} b^{\beta_m}$, where $\alpha_i, \beta_i \geq 1$ for all $i \in \{2,\ldots,m-1\}$, $\alpha_1, \beta_m \geq 0$, $\alpha_1 + \cdots + \alpha_m = p$, $\beta_1 + \cdots + \beta_m = q$ and $2m-1 - (1-\alpha_1) - (1-\beta_m) = d$ (which means that the number of nonempty blocks of $a$s and $b$s is equal to $d+1$).

    In the proof below, we will repeatedly use that the number of compositions of $n$ into $k$ parts, i.e.\ the number of integer solutions of $x_1 + \cdots + x_k = n$, $x_i \geq 1$ for all $i \in [k]$, is equal to $\binom{n-1}{k-1}$.

    If $\alpha_1, \beta_m \geq 1$, then $d = 2m-1$, so $d$ is odd, and $m = \frac{d+1}{2}$. Thus, by the comment above, there is $\binom{p-1}{\frac{d+1}{2}-1} \binom{q-1}{\frac{d+1}{2}-1}$ such vertices. Similarly we count the ones starting with $b$ and ending with $a$, that is, the ones with $\alpha_1 = \beta_m = 0$.

    If $\alpha_1 \geq 1$ and $\beta_m = 0$, then $d = 2m-2$, so $d$ is even, and $m = \frac{d}{2} + 1$. Thus there is $\binom{p-1}{\frac{d}{2}} \binom{q-1}{\frac{d}{2} - 1}$ such words. Similarly, we count the ones starting and ending with $b$, that is, the ones with $\alpha_1 = 0$ and $\beta_m \geq 1$.

    By definition, $\binom{n}{k} \geq 1$ if and only if $0 \leq k \leq n$, thus the formulas for minimum and maximum degree of $A_{p,q}$ follow from the determined number of vertices of degree $d$.
\end{proof}

%%%%%%%%%%%%%%%%%%%%%%%%%%%%%%%%%%%%%
\section{Metric properties}
\label{sec:metric}
%%%%%%%%%%%%%%%%%%%%%%%%%%%%%%%%%%%%%

In this section we determine several metric properties of weighted Padovan graphs. We begin with the distance function for which the partition representation $\Pi_{p,q}$ turns out to be the most convenient one. Using Theorem~\ref{thm:isomorphisms} the distance in $\pad{n}{k}$ and $A_{p,q}$ can be obtained as well.

\begin{proposition}
    \label{prop:distance-in-Pi}
    If $p,q \geq 0$ and $\lambda = (\lambda_1, \ldots, \lambda_q), \mu = (\mu_1, \ldots, \mu_q) \in V(\Pi_{p,q})$, then $d_{\Pi_{p,q}}(\lambda, \mu) = \sum_{i=1}^q |\lambda_i - \mu_i|$. 
\end{proposition}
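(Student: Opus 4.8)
The plan is to prove the two inequalities $d_{\Pi_{p,q}}(\lambda,\mu) \ge \sum_{i=1}^q |\lambda_i - \mu_i|$ and $d_{\Pi_{p,q}}(\lambda,\mu) \le \sum_{i=1}^q |\lambda_i - \mu_i|$ separately, thinking of a vertex $(\lambda_1,\dots,\lambda_q)$ as a lattice point of $\mathbb{Z}^q$ and of $\sum_i|\lambda_i-\mu_i|$ as the $\ell_1$-distance between two such points. The lower bound is the easy direction: by definition an edge of $\Pi_{p,q}$ changes exactly one part by exactly $1$, so traversing a single edge changes the quantity $\sum_i|\lambda_i-\mu_i|$ (with $\mu$ held fixed) by at most $1$. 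Consequently any walk from $\lambda$ to $\mu$ has length at least the total $\ell_1$-displacement $\sum_i|\lambda_i-\mu_i|$, which gives the lower bound.

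For the upper bound I would exhibit a geodesic of exactly that length, and the natural route is through the coordinatewise minimum $\nu = (\min(\lambda_1,\mu_1),\dots,\min(\lambda_q,\mu_q))$. First I would check that $\nu$ is a genuine vertex of $\Pi_{p,q}$: the coordinatewise minimum of two weakly decreasing sequences with entries in $\{0,\dots,p\}$ is again weakly decreasing with entries in $\{0,\dots,p\}$. Moreover the $\ell_1$-distances split additively through $\nu$, since $|\lambda_i-\nu_i| + |\nu_i-\mu_i| = \max(0,\lambda_i-\mu_i)+\max(0,\mu_i-\lambda_i) = |\lambda_i-\mu_i|$ for each $i$. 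Hence it suffices to build, for two comparable vertices $\sigma \le \tau$ (meaning $\sigma_i \le \tau_i$ for all $i$), an increasing path from $\sigma$ to $\tau$ of length $\sum_i(\tau_i-\sigma_i)$; concatenating a descending path $\lambda \to \nu$ (the reversal of an increasing path $\nu \to \lambda$) with an ascending path $\nu \to \mu$ then yields the desired geodesic.

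The crux — and the step I expect to be the only real obstacle — is verifying that from any vertex $\sigma < \tau$ there is always a legal single step toward $\tau$, i.e.\ an index at which adding $1$ keeps the sequence a weakly decreasing vertex without overshooting $\tau$. Here I would let $i$ be the smallest index with $\sigma_i < \tau_i$. For $j<i$ we have $\sigma_j = \tau_j$, so $\sigma_{i-1} = \tau_{i-1} \ge \tau_i \ge \sigma_i + 1$ (the last inequality because $\sigma_i < \tau_i$ are integers); thus raising $\sigma_i$ by $1$ does not violate $\sigma_{i-1} \ge \sigma_i$, does not violate $\sigma_i \ge \sigma_{i+1}$ (we already have $\sigma_i \ge \sigma_{i+1}$), and stays $\le p$ since $\sigma_i + 1 \le \tau_i \le p$. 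The resulting vertex $\sigma + e_i$ still satisfies $\sigma + e_i \le \tau$, so induction on $\sum_i(\tau_i - \sigma_i)$ produces an increasing path of length exactly $\sum_i(\tau_i - \sigma_i)$ staying inside $\Pi_{p,q}$. This closes the upper bound, matches the lower bound, and completes the proof.
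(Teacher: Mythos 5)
Your proof is correct, and it takes a genuinely different route from the paper's. The paper's proof is a short reduction to the rearrangement inequality: it asserts (with a ``clearly'') that $d_{\Pi_{p,q}}(\lambda,\mu)=\min_{\sigma}\sum_{i=1}^q|\lambda_i-\mu_{\sigma(i)}|$, the minimum running over all permutations $\sigma$ of $[q]$ (treating the parts as an unlabelled multiset), and then shows the identity permutation attains this minimum because both sequences are weakly decreasing, by passing to $\sum_i(\lambda_i-\mu_{\sigma(i)})^2$ and maximizing $\sum_i\lambda_i\mu_{\sigma(i)}$ via Hardy--Littlewood--P\'olya \cite[Theorem 368]{hardy-1952}. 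You never mention permutations: you get the lower bound from the per-edge $\ell_1$ estimate, and the upper bound from an explicit geodesic through the coordinatewise minimum $\nu$, with the greedy ``increment the leftmost deficient part'' step verifying that the path never leaves the set of weakly decreasing tuples with parts at most $p$. The trade-off is clear: the paper's argument is briefer but outsources real work to its ``clearly'' --- that step hides both the edge-by-edge lower bound and, more seriously, the fact that the optimal matching cost is actually achievable by a walk staying inside $V(\Pi_{p,q})$, which is precisely your greedy construction; your version is longer but fully self-contained and needs no classical inequality. Your route also exposes the meet/join lattice structure of $\Pi_{p,q}$, which resonates with the isometric-embedding and median argument of Theorem~\ref{thm:median}. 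One line worth adding: since vertices are \emph{sorted} tuples, you should note that incrementing a part and re-sorting changes exactly one coordinate of the sorted tuple by exactly $1$ (namely at the first position of the run of equal parts), so adjacency really is a step $\pm e_i$ in your lattice-point picture; with that remark both your lower bound and your identification of edges with unit steps are airtight.
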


\begin{proof}
    Recall that by definition, $\lambda_1 \geq \cdots \geq \lambda_q$ and $\mu_1 \geq \cdots \geq \mu_q$. Clearly, $d(\lambda, \mu) = \displaystyle \min \sum_{i=1}^q |\lambda_i - \mu_{\sigma(i)}|$, where the minimum is taken over all permutations $\sigma$ of $[q]$. The same $\sigma$ that minimizes this sum also minimizes $\sum_{i=1}^q (\lambda_i - \mu_{\sigma(i)})^2 = \sum_{i = 1}^q \lambda_i^2 + \sum_{i = 1}^q \mu_i^2 - 2 \sum_{i=1}^q \lambda_i \mu_{\sigma(i)}$. The $\sigma$ minimizing this expression is exactly the same as the one maximizing $\sum_{i=1}^q \lambda_i \mu_{\sigma(i)}$. By the rearrangement inequality from \cite[Theorem 368]{hardy-1952}, this sum is maximized if $\sigma(i)$ is such that $\mu_{\sigma(1)} \geq \cdots \geq \mu_{\sigma(q)}$, so when $\sigma$ is the identity.
\end{proof}

A graph $G$ is a {\em median graph} if for every triple of vertices $u,v,w$ of $G$ there exists a unique vertex $m(u,v,w)$, called {\em median}, which lies on a shortest $u,v$-path, on a shortest $u,w$-path, and on a shortest $v,w$-path. Recall that median graphs embed isometrically into hypercubes, hence by the subsequent theorem weighted Padovan graphs also have this property, that is, they are {\em partial cubes}. We refer to ~\cite[Chapters 4 and 6]{egecioglu-2023} for more on partial cubes and median graphs, and the relation of Fibonacci-like cubes to these classes of graphs. 

\begin{theorem}
\label{thm:median}
If $n\ge 1$ and $\left \lfloor \frac{n}{2} \right \rfloor \leq k \leq \left \lfloor \frac{2n-2}{3} \right \rfloor$, then $\pad{n}{k}$ is a median graph. 
\end{theorem}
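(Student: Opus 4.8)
The plan is to transport the problem to the partition model $\Pi_{p,q}$, where by Proposition~\ref{prop:distance-in-Pi} the graph distance is the $L^1$ distance, and then to exhibit the median explicitly via coordinate-wise medians. By Theorem~\ref{thm:isomorphisms} it suffices to prove that $\Pi_{p,q}$ is a median graph, where $p = 2n-3k-2$ and $q = 2k-n+1$. Given three vertices $\lambda = (\lambda_1,\dots,\lambda_q)$, $\mu = (\mu_1,\dots,\mu_q)$, $\nu = (\nu_1,\dots,\nu_q)$, I would set $m_i = \mathrm{med}(\lambda_i,\mu_i,\nu_i)$ (the middle of the three values) for each $i$, and claim that $m = (m_1,\dots,m_q)$ is the unique median of the triple.

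First I would check that $m$ is genuinely a vertex of $\Pi_{p,q}$. The bounds $0 \le m_i \le p$ are immediate, since each $m_i$ is one of three numbers already lying in $[0,p]$. The step requiring care --- and the main obstacle --- is the partition constraint $m_1 \ge \cdots \ge m_q$: one must verify that the coordinate-wise median of three non-increasing sequences is again non-increasing. This holds because $\mathrm{med}$ is monotone in each of its three arguments, so $\lambda_i \ge \lambda_{i+1}$, $\mu_i \ge \mu_{i+1}$, $\nu_i \ge \nu_{i+1}$ together force $m_i \ge m_{i+1}$. This is the only point where the staircase geometry of $\Pi_{p,q}$, as opposed to the generic $L^1$ structure of a full grid, actually enters the argument.

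For existence, I would use the standard fact that a vertex $x$ lies on a shortest $\lambda,\mu$-path exactly when $d(\lambda,x)+d(x,\mu)=d(\lambda,\mu)$; splitting this coordinate-wise using Proposition~\ref{prop:distance-in-Pi} and the inequality $|\lambda_i-x_i|+|x_i-\mu_i|\ge|\lambda_i-\mu_i|$, equality holds iff $\min(\lambda_i,\mu_i)\le x_i\le\max(\lambda_i,\mu_i)$ for every $i$. Since the median of three numbers always lies in the closed interval determined by any two of them, $m_i$ lies between $\lambda_i$ and $\mu_i$, between $\lambda_i$ and $\nu_i$, and between $\mu_i$ and $\nu_i$; hence $m$ lies on a shortest path for each of the three pairs. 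For uniqueness, any median $x$ must satisfy all three betweenness conditions simultaneously, so in each coordinate $x_i$ lies in the intersection of the three pairwise intervals of $\lambda_i,\mu_i,\nu_i$; for three reals this intersection is the single point $\{\mathrm{med}(\lambda_i,\mu_i,\nu_i)\} = \{m_i\}$, forcing $x = m$. This yields the unique median and completes the proof. Conceptually, the argument says that $\Pi_{p,q}$ is an isometric, median-closed subgraph of the grid $P_{p+1} \cp \cdots \cp P_{p+1}$ ($q$ factors), which is a median graph as a Cartesian product of paths.
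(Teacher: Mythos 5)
Your proof is correct, and it takes a genuinely different route from the paper's, although both share the same skeleton: reduce to $\Pi_{p,q}$ via Theorem~\ref{thm:isomorphisms}, use the $L^1$ distance formula of Proposition~\ref{prop:distance-in-Pi}, and identify the coordinate-wise median as the median of a triple. The difference lies in how the median property is certified. The paper embeds $\Pi_{p,q}$ isometrically into the hypercube $Q_{pq}$ (sending a part $\lambda_i$ to the word $0^{p-\lambda_i}1^{\lambda_i}$ and concatenating), observes that the image is closed under the hypercube's median operation, and then invokes Mulder's characterization of median graphs as connected, median-closed isometric subgraphs of hypercubes. You instead verify everything directly inside $\Pi_{p,q}$: existence of the median because the median of three reals lies in each pairwise interval, so the triangle inequality is tight coordinate-wise; uniqueness because the three pairwise intervals of three reals intersect in exactly the median point. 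Your observation that the coordinate-wise median of three non-increasing sequences is again non-increasing (monotonicity of the three-argument median in each argument) is precisely where the partition structure enters, and it plays the role of the paper's (implicit) check that the image of its embedding is median-closed; your closing remark about the grid being a Cartesian product of paths is essentially the paper's argument with the grid in place of the hypercube. What your approach buys is self-containedness: nothing beyond Proposition~\ref{prop:distance-in-Pi} is needed, and no appeal to Mulder's theorem. What the paper's approach buys is reusability and context: the explicit isometric embedding into $Q_{pq}$ is reused immediately afterwards to compute $\diam(\pad{n}{k})$, and it exhibits the weighted Padovan graphs concretely as partial cubes.
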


\begin{proof}
Let $p=2n-3k-2$ and $q=2k-n+1$. Then by Theorem~\ref{thm:isomorphisms}, $\pad{n}{k} \cong \Pi_{p,q}$. Let $x = (\lambda_1,\ldots, \lambda_q)\in V(\Pi_{p,q})$. Then $0\le \lambda_i\le p$, $i\in [q]$. Set
$$\alpha_i(x) = \underbrace{0\ldots 0}_{p-\lambda_i} \underbrace{1\ldots 1}_{\lambda_i}, i\in [q],\quad {\rm and}\quad \alpha(x) = \alpha_1(x)\ldots \alpha_q(x)\,,$$
where $\alpha_1(x)\ldots \alpha_q(x)$ stands for the concatenation of the binary words $\alpha_1(x), \ldots, \alpha_q(x)$. The mapping $\alpha$ can thus be considered as
$$\alpha: V(\Pi_{p,q})\rightarrow V(Q_{pq})\,.$$
Let $x = (\lambda_1,\ldots, \lambda_q)$ and $y = (\mu_1,\ldots, \mu_q)$ be arbitrary vertices of $\Pi_{p,q}$. By Proposition~\ref{prop:distance-in-Pi} we have $d_{\Pi_{p,q}}(x,y) = \sum_{i=1}^q |\lambda_i - \mu_i|$. By the definition of $\alpha$, the words $\alpha_i(x)$ and $\alpha_i(y)$ differ in $|\lambda_i - \mu_i|$ positions. Hence  $\alpha(x)$ and $\alpha(y)$ differ in $\sum_{i=1}^q |\lambda_i - \mu_i|$ positions, that is, 
\begin{equation}
\label{eq:isometric-embedding}
d_{\Pi_{p,q}}(x,y) = d_{Q_{pq}}(\alpha(x), \alpha(y))\,.    
\end{equation}
Let $x = (\lambda_1,\ldots, \lambda_q)$, $y = (\mu_1,\ldots, \mu_q)$, and $z = (\tau_1,\ldots, \tau_q)$ be arbitrary vertices of $\Pi_{p,q}$. For each $i\in [q]$, let $\sigma_i$ be the median value of the set $\{\lambda_i, \mu_i, \tau_i\}$. Then in view of Proposition~\ref{prop:distance-in-Pi} and~\eqref{eq:isometric-embedding} we infer that $m(\alpha(x),\alpha(y),\alpha(z)) = \alpha(\sigma_1,\ldots, \sigma_q)$ is a median of $\alpha(x)$, $\alpha(y)$, and $\alpha(z)$. Moreover, $m(\alpha(x),\alpha(y),\alpha(z)) \in V(\alpha(\Pi_{p,q}))$. 

We have thus proved that $\alpha(\Pi_{p,q})$ is an isometric subgraph of $Q_{pq}$ such that with any three vertices of $\alpha(\Pi_{p,q})$, their median in $Q_{pq}$ is a vertex of $\alpha(\Pi_{p,q})$. Applying the theorem of Mulder~\cite{mulder-1978} (cf.\ also~\cite{klavzar-1999}) asserting that a graph $G$ is a median graph if and only if $G$ is a connected isometric subgraph of some $Q_n$ such that with any three vertices of $G$ their median in $Q_n$ is also a vertex of $G$, the argument is complete. 
\end{proof}

The proof of Theorem~\ref{thm:median} can be used to determine the diameter of weighted Padovan graphs.

\begin{proposition}
    If $n\ge 1$ and $\left \lfloor \frac{n}{2} \right \rfloor \leq k \leq \left \lfloor \frac{2n-2}{3} \right \rfloor$, then 
$$\diam(\pad{n}{k}) = (2n-3k-2)(2k-n+1)\,.$$ 
\end{proposition}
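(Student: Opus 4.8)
The plan is to use the isomorphism $\pad{n}{k} \cong \Pi_{p,q}$ with $p = 2n-3k-2$ and $q = 2k-n+1$, together with the explicit distance formula from Proposition~\ref{prop:distance-in-Pi}, and then simply maximize that distance over all pairs of vertices. Since $\diam(\pad{n}{k}) = \diam(\Pi_{p,q})$, it suffices to show that $\max_{x,y} d_{\Pi_{p,q}}(x,y) = pq$, because $pq = (2n-3k-2)(2k-n+1)$ is exactly the claimed value.

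First I would recall that, by Proposition~\ref{prop:distance-in-Pi}, for $x = (\lambda_1,\ldots,\lambda_q)$ and $y = (\mu_1,\ldots,\mu_q)$ in $V(\Pi_{p,q})$ we have $d_{\Pi_{p,q}}(x,y) = \sum_{i=1}^q |\lambda_i - \mu_i|$, where all parts satisfy $0 \le \lambda_i, \mu_i \le p$. The immediate upper bound is then
\begin{equation*}
d_{\Pi_{p,q}}(x,y) = \sum_{i=1}^q |\lambda_i - \mu_i| \le \sum_{i=1}^q p = pq,
\end{equation*}
since each individual term $|\lambda_i - \mu_i|$ is at most $p$. This shows $\diam(\Pi_{p,q}) \le pq$.

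For the lower bound I would exhibit an explicit pair attaining equality. Take $x = (p, p, \ldots, p)$, the weak partition all of whose $q$ parts equal $p$ (valid, since its sum is $pq$ and the largest part is $p$), and $y = (0, 0, \ldots, 0)$, the all-zero weak partition. Both are legitimate vertices of $\Pi_{p,q}$ by the definition, which permits parts of size $0$ and the full range $0,\ldots,pq$ for the sum. Then $d_{\Pi_{p,q}}(x,y) = \sum_{i=1}^q |p - 0| = pq$, so the bound is achieved and $\diam(\Pi_{p,q}) = pq$.

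Combining the two bounds and unwinding the substitution gives $\diam(\pad{n}{k}) = pq = (2n-3k-2)(2k-n+1)$, as desired. I do not expect any genuine obstacle here: the real work was already done in Proposition~\ref{prop:distance-in-Pi}, which converts the graph distance into an $\ell_1$ distance between the partition vectors, and once that formula is in hand the diameter computation is a one-line optimization. The only point requiring a moment's care is confirming that the extremal pair $x$ and $y$ genuinely lie in $V(\Pi_{p,q})$ — in particular that the all-$p$ vector is a valid weak partition and that the sum constraint ``$0,\ldots,pq$'' is satisfied at both endpoints — but this is immediate from the definition.
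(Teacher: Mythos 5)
Your proposal is correct, and it proves exactly what the paper proves, via the same two bounds and the same extremal pair of vertices; the only difference is the mechanism for the upper bound. The paper deliberately reuses the isometric embedding $\alpha \colon V(\Pi_{p,q}) \to V(Q_{pq})$ constructed in the proof of Theorem~\ref{thm:median}: since that embedding is isometric (which was itself verified using Proposition~\ref{prop:distance-in-Pi}), the diameter of $\Pi_{p,q}$ is at most $\diam(Q_{pq}) = pq$, and the images $0^{pq}$ and $1^{pq}$ of the all-zero and all-$p$ partitions realize $pq$. You instead bound the $\ell_1$ sum $\sum_{i=1}^q |\lambda_i - \mu_i|$ termwise by $p$ directly from Proposition~\ref{prop:distance-in-Pi}, with no reference to the hypercube. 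The two arguments are mathematically equivalent, but yours is slightly more self-contained (it depends only on Proposition~\ref{prop:distance-in-Pi}, not on the proof of Theorem~\ref{thm:median}), while the paper's phrasing buys economy of exposition by recycling machinery it has already built, and in passing makes explicit that the diameter is realized by a pair of vertices mapped to antipodal vertices of the hypercube. Your attention to the degenerate check that $(p,\ldots,p)$ and $(0,\ldots,0)$ are genuine vertices is correct and harmless; note also that when $p=0$ or $q=0$ the graph is a single vertex and the formula reads $0$, so the statement holds there as well.
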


\begin{proof}
Let $p=2n-3k-2$ and $q=2k-n+1$. Since $\diam(Q_{pq}) = pq$ and the mapping $\alpha: V(\Pi_{p,q})\rightarrow V(Q_{pq})$ from the proof of Theorem~\ref{thm:median} is an isometric embedding, we get $\diam(\pad{n}{k}) \le pq$. On the other hand, $\alpha((0,\ldots, 0)) = 0^{pq}$  and $\alpha((p,\ldots, p)) = 1^{pq}$, so that 
\begin{align*}
\diam(\pad{n}{k}) & \ge d_{\Pi_{p,q}}((0,\ldots, 0), (p,\ldots, p))\\ 
& = d_{Q_{pq}}(\alpha((0,\ldots, 0)), \alpha((p,\ldots, p))) \\ 
& = d_{Q_{pq}}(0^{pq}, 1^{pq}) = pq = (2n-3k-2)(2k-n+1)\,. \qedhere
\end{align*}
\end{proof}

%%%%%%%%%%%%%%%%%%%%%%%%%%%%%%%%%%%%%
\section{The cube polynomial}
\label{sec:cube-poly}
%%%%%%%%%%%%%%%%%%%%%%%%%%%%%%%%%%%%%

The {\em cube polynomial} of a graph $G$ is denoted by $C\left( G,x\right)$, and is the generating function $C\left( G,x\right) =\sum_{n\geq 0}c_{n}\left(G\right) x^{n},$ where $c_{n}\left( G\right) $ counts the number of induced $n$-cubes in $G$. Clearly, $c_{0}\left( G\right) =|V(G)|$ and $c_{1}\left( G\right) =|E(G)|$. 

The cube polynomial was first studied in~\cite{bresar-2003}. Among the many subsequent investigations we point to its applicability in mathematical chemistry~\cite{berlic-2015, zhang-2013}, to its investigation on daisy cubes which as particular cases include Fibonacci cubes and Lucas cubes~\cite{klavzar-2019}, to the cube polynomial of tribonacci cubes~\cite{belbachir-2020}, and to an appealing relation between the cube polynomial and the clique polynomial~\cite{xie-2024}. For the cube polynomial of the weighted Padovan graphs we have:

\begin{theorem}
\label{thm:generating-cube-poly}
The generating function of the cube polynomial $C\left( \Phi
_{k}^{n},x\right) $ is%
\begin{equation*}
\sum_{n\geq 0}\sum_{k\geq 0}C\left( \Phi _{k}^{n},x\right) y^{n}z^{k}=\frac{y%
}{1-y^{2}z\left( 1+yz\left( 1+xy^{2}z\right) \right) }.
\end{equation*}
\end{theorem}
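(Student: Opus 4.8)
The plan is to convert the problem into a recurrence for the cube polynomial via the fundamental decomposition of Section~\ref{sec:order-size-degree}, and then translate that recurrence into the claimed rational generating function. Writing $C(\pad{n}{k},x)=\sum_{d\ge 0}c_d(\pad{n}{k})x^d$ and adopting the convention that $C(\pad{n}{k},x)=0$ whenever there are no Padovan words of length $n$ with $k$ ones, I expect the single relation
$$C(\pad{n}{k},x)=C(\pad{n-2}{k-1},x)+C(\pad{n-3}{k-2},x)+x\,C(\pad{n-5}{k-3},x)$$
to hold for every admissible pair $(n,k)$ with the sole exception of the ``atom'' $(n,k)=(1,0)$.

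First I would establish this recurrence by counting induced cubes. Recall the fundamental decomposition $\pad{n}{k}=\mathbf{01}\pad{n-2}{k-1}+\mathbf{011}\pad{n-3}{k-2}$, with $X\cong\pad{n-2}{k-1}$, $Y\cong\pad{n-3}{k-2}$, and the crossing edges forming a perfect matching between a copy of $\pad{n-5}{k-3}$ in $X$ and a copy in $Y$. An induced $d$-cube of $\pad{n}{k}$ either lies entirely in $X$, entirely in $Y$, or uses at least one crossing edge. For the third type I would invoke Theorem~\ref{thm:median}: since $\pad{n}{k}$ is a median graph, and thus a partial cube, the crossing edges form a single $\Theta$-class, hence within any induced $d$-cube they constitute exactly one coordinate direction. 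Such a cube therefore splits as $Q_{d-1}\cp K_2$, where the $Q_{d-1}$ lies in the matched copy of $\pad{n-5}{k-3}$; conversely every induced $(d-1)$-cube of $\pad{n-5}{k-3}$ extends through the matching to an induced crossing $d$-cube. This yields $c_d(\pad{n}{k})=c_d(\pad{n-2}{k-1})+c_d(\pad{n-3}{k-2})+c_{d-1}(\pad{n-5}{k-3})$, which becomes the claimed polynomial recurrence after multiplying by $x^d$ and summing over $d$; the case $d=1$ reproduces the edge recurrence of Proposition~\ref{prop:order-size}, serving as a sanity check.

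Next I would set $F(x,y,z)=\sum_{n\ge 0}\sum_{k\ge 0}C(\pad{n}{k},x)\,y^n z^k$, multiply the recurrence by $y^n z^k$, and sum over all $(n,k)$. The three shifted families contribute $y^2 z\,F$, $y^3 z^2\,F$, and $x y^5 z^3\,F$ respectively, while the lone defect at $(n,k)=(1,0)$, where $C(\pad{1}{0},x)=C(K_1,x)=1$ but the right-hand side of the recurrence vanishes, contributes the single monomial $y$. This gives the functional equation $F=y+(y^2 z+y^3 z^2+x y^5 z^3)F$, whence
$$F=\frac{y}{1-y^2 z-y^3 z^2-x y^5 z^3}=\frac{y}{1-y^2 z\left(1+yz\left(1+x y^2 z\right)\right)},$$
the last equality being the factorization $y^2 z+y^3 z^2+x y^5 z^3=y^2 z\left(1+yz\left(1+x y^2 z\right)\right)$, which matches the asserted form exactly.

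The main obstacle I anticipate is the careful justification of the crossing-cube bijection together with the verification that $(1,0)$ is the unique base case. For the former one must confirm that the crossing edges really do form one coordinate direction of each induced cube and that the resulting cube is induced, with no extraneous edges; this is precisely what the partial-cube structure and the absence of non-matching edges between $X$ and $Y$ guarantee. For the latter one should check that the recurrence, read with the zero convention, already holds down to the smallest admissible graphs (for instance $\pad{6}{3}=K_2$ giving $2+x$, and $\pad{8}{4}=P_3$ giving $3+2x$), so that $\pad{1}{0}=K_1$ is indeed the only pair not produced by the fundamental decomposition and hence the only source of the seed term $y$.
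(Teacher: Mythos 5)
Your proof is correct and takes essentially the same route as the paper: both use the fundamental decomposition to derive the recurrence $C\left(\pad{n}{k},x\right)=C\left(\pad{n-2}{k-1},x\right)+C\left(\pad{n-3}{k-2},x\right)+x\,C\left(\pad{n-5}{k-3},x\right)$ and then convert it into the functional equation $F=y+\left(y^{2}z+y^{3}z^{2}+xy^{5}z^{3}\right)F$. Your only deviations are organizational and both sound: you justify the crossing-cube case via the $\Theta$-class structure of the partial cube (the paper simply asserts the trichotomy), and you package all boundary behavior as a single defect at $(n,k)=(1,0)$ rather than, as the paper does, restricting the recurrence to $n\geq 5$, $k\geq 3$ and listing the initial values for small $n$ explicitly.
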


\begin{proof}
Let $Q_m$ be an induced subgraph of $\pad{n}{k}$. By the fundamental decomposition, exactly one of the following holds:
\begin{enumerate}
    \item $Q_m$ is an induced subgraph in $01\pad{n-2}{k-1} \cong \pad{n-2}{k-1}$,
    \item $Q_m$ is an induced subgraph in $011\pad{n-3}{k-2} \cong \pad{n-3}{k-2}$, or
    \item $Q_m = Q_{m-1} \cp K_2$, where the edges of $K_2$ correspond exactly to edges between copies of $01\pad{n-2}{k-1}$ and $011\pad{n-3}{k-2}$ in $\pad{n}{k}$, so $Q_{m-1}$ is an induced subgraph in $\pad{n-5}{k-3}$.
\end{enumerate}
Thus, if $n \geq 5$ and $k \geq 3$, then the cube polynomial of $\pad{n}{k}$ satisfies the following recursive relation:

$$
C\left( \Phi _{k}^{n},x\right) =C\left( \Phi _{k-1}^{n-2},x\right) +C\left(
\Phi _{k-2}^{n-3},x\right) +xC\left( \Phi _{k-3}^{n-5},x\right).
$$

Let the generating function of the cube polynomial be
\begin{equation*}
f\left( x,y,z\right) =\sum_{n\geq 0}\sum_{k\geq 0}C\left( \Phi
_{k}^{n},x\right) y^{n}z^{k}.
\end{equation*}
By using the recurrence relation and the
values 
\begin{itemize}
\item $\sum_{k\geq 0}C\left( \Phi _{k}^{0},x\right) y^{0}z^{k}=0$,  
\item $\sum_{k\geq 0}C\left( \Phi _{k}^{1},x\right) yz^{k}=y$,  
\item $\sum_{k\geq 0}C\left( \Phi _{k}^{2},x\right) y^{2}z^{k}=0$, 
\item $\sum_{k\geq 0}C\left( \Phi
_{k}^{3},x\right) y^{3}z^{k}=y^{3}z$,  \item $\sum_{k\geq 0}C\left( \Phi _{k}^{4},x\right) y^{4}z^{k}=y^{4}z^{2}$, 
\item $\sum_{k\geq 0}C\left( \Phi
_{k}^{n},x\right) y^{n}z^{k}=\sum_{k\geq 3}C\left( \Phi _{k}^{n},x\right)
y^{n}z^{k}+ y^{5}z^{2}$, $n\geq 5$,
\end{itemize}
we get 
\begin{align*}
&f\left( x,y,z\right) \left[ 1-y^{2}z-y^{3}z^{2}-xy^{5}z^{3}\right]  \\
&=\sum_{n\geq 0}\sum_{k\geq 0}C\left( \Phi _{k}^{n},x\right)
y^{n}z^{k}-\sum_{n\geq 2}\sum_{k\geq 1}C\left( \Phi _{k-1}^{n-2},x\right)
y^{n}z^{k} \\
&-\sum_{n\geq 3}\sum_{k\geq 2}C\left( \Phi _{k-2}^{n-3},x\right)
y^{n}z^{k}-x\sum_{n\geq 5}\sum_{k\geq 3}C\left( \Phi _{k-3}^{n-5},x\right)
y^{n}z^{k} \\
&=y+\sum_{n\geq 5}\sum_{k\geq 3}\left( C\left( \Phi _{k}^{n},x\right)
-C\left( \Phi _{k-1}^{n-2},x\right) -C\left( \Phi _{k-2}^{n-3},x\right)
-xC\left( \Phi _{k-3}^{n-5},x\right) \right) y^{n}z^{k} \\
&=y\,. \qedhere
\end{align*}
\end{proof}

By using the generating function of the cube polynomial, we obtain the cube polynomial itself as follows. 

\begin{theorem}
\label{thm:cube-poly}
If $n \geq 1$ and $\left \lfloor \frac{n}{2} \right \rfloor \leq k \leq \left \lfloor \frac{2n-2}{3} \right \rfloor$, then
\begin{equation*}
C\left( \Phi _{k}^{n},x\right) =\sum_{j\geq 0}\binom{n-k-j-1}{2k-n+1}\binom{
2k-n+1}{j}x^{j}.
\end{equation*} 
\end{theorem}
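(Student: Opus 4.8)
The plan is to extract the coefficient of $y^n z^k$ from the generating function established in Theorem~\ref{thm:generating-cube-poly} and to show it equals the claimed sum. First I would simplify the denominator: expanding the nested expression gives
$$1 - y^{2}z\bigl(1 + yz(1 + xy^{2}z)\bigr) = 1 - \bigl(y^{2}z + y^{3}z^{2} + x y^{5}z^{3}\bigr),$$
so that, as a formal power series,
$$\sum_{n\ge 0}\sum_{k\ge 0} C(\pad{n}{k},x)\, y^{n} z^{k} = y\sum_{m\ge 0}\bigl(y^{2}z + y^{3}z^{2} + x y^{5}z^{3}\bigr)^{m}.$$

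Next I would apply the multinomial theorem to $(y^{2}z + y^{3}z^{2} + x y^{5}z^{3})^{m}$: each term is determined by choosing $a$ factors equal to $y^{2}z$, $b$ factors equal to $y^{3}z^{2}$, and $c$ factors equal to $x y^{5}z^{3}$, with $a+b+c=m$, and contributes $\binom{m}{a,b,c}\, x^{c} y^{2a+3b+5c} z^{a+2b+3c}$. To read off the coefficient of $x^{j} y^{n} z^{k}$, I set $c=j$ and, absorbing the leading factor $y$, solve the linear system $2a+3b+5j = n-1$, $a+2b+3j = k$. The key computation is that this system has the unique solution $a = 2n-3k-2-j$, $b = 2k-n+1-j$, $c=j$, whence $m = a+b+c = n-k-1-j$. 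Consequently the coefficient of $x^{j}$ in $C(\pad{n}{k},x)$ is the single multinomial coefficient
$$\frac{(n-k-1-j)!}{(2n-3k-2-j)!\,(2k-n+1-j)!\,j!}.$$

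Finally I would convert this into a product of two binomial coefficients. Writing $p = 2n-3k-2$ and $q = 2k-n+1$ as before, so that $p+q = n-k-1$, a direct factorial manipulation gives
$$\frac{(p+q-j)!}{(p-j)!\,(q-j)!\,j!} = \binom{p+q-j}{q}\binom{q}{j} = \binom{n-k-j-1}{2k-n+1}\binom{2k-n+1}{j},$$
and summing over $j\ge 0$ yields the stated formula. I expect the only genuine work to be the bookkeeping in the exponent-matching step: verifying that fixing $c=j$ forces $a$ and $b$ uniquely, so that each power of $x$ receives a contribution from exactly one composition and no residual summation over $(a,b,c)$ survives. The concluding factorial identity is routine, and the nonnegativity constraints $a,b,c\ge 0$ are automatically encoded by the binomial coefficients vanishing outside the admissible range of $j$.
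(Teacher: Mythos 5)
Your proposal is correct and takes essentially the same route as the paper: both proofs extract the coefficient of $x^{j}y^{n}z^{k}$ from the generating function of Theorem~\ref{thm:generating-cube-poly}. The only organizational difference is that you expand the geometric series via the multinomial theorem and solve the exponent-matching linear system explicitly (getting the single multinomial coefficient $\tfrac{(n-k-1-j)!}{(2n-3k-2-j)!\,(2k-n+1-j)!\,j!}$), whereas the paper performs the equivalent iterated binomial expansions producing $\binom{n}{k}\binom{k}{j}$ and then reindexes the triple sum; the two computations yield the same coefficients.
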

\begin{proof}
With Theorem~\ref{thm:generating-cube-poly} in hand, we can compute as follows:
\begin{align*}
f\left( x,y,z\right)  &=\frac{y}{1-y^{2}z\left( 1+yz\left( 1+xy^{2}z\right)
\right) } \\
&=y\sum_{n\geq 0}\sum_{k\geq 0}\binom{n}{k}\left( yz\right) ^{k}\left(
1+xy^{2}z\right) ^{k}\left( y^{2}z\right) ^{n} \\
&=\sum_{n\geq 0}\sum_{k\geq 0}\left( \sum_{j=0}^{k}\binom{n}{k}\binom{k}{j}
x^{j}\right) y^{2n+k+2j+1}z^{n+k+j} \\
&=\sum_{n\geq 0}\sum_{j\geq 0}\left( \sum_{k\geq j}\binom{n}{k}\binom{k}{j}
x^{j}\right) y^{2n+k+2j+1}z^{n+k+j} \\
&=\sum_{n\geq 0}\sum_{j\geq 0}\sum_{k\geq -n}\binom{n}{k-n-j}\binom{k-n-j}{j
}x^{j}y^{n+k+j+1}z^{k} \\
&=\sum_{n\geq 0}\sum_{j\geq 0}\sum_{k\geq n+j}\binom{n}{k-n-j}\binom{k-n-j}{
j}x^{j}y^{n+k+j+1}z^{k} \\
&=\sum_{n\geq 0}\sum_{k\geq j}\sum_{n\geq k-j}\binom{n}{k-n-j}\binom{k-n-j}{
j}x^{j}y^{n+k+j+1}z^{k} \\
&=\sum_{j\geq 0}\sum_{k\geq j}\sum_{n\geq -2j-1}\binom{n-k-j-1}{2k-n+1}
\binom{2k-n+1}{j}x^{j}y^{n}z^{k} \\
&=\sum_{n\geq 0}\sum_{k\geq 0}\left( \sum_{j\geq 0}\binom{n-k-j-1}{2k-n+1}
\binom{2k-n+1}{j}x^{j}\right) y^{n}z^{k} \,. \qedhere
\end{align*}
\end{proof}

The following result can be deduced from Theorem~\ref{thm:cube-poly} by determining the maximum degree of $C(\pad{n}{k},x)$ and the corresponding coefficient. However, we provide a different proof below since it illuminates the structure of the largest hypercubes contained in $\pad{n}{k}\cong A_{p,q}$.

\begin{proposition}
    \label{prop:largest-cube}
    The largest hypercube contained in $A_{p,q}$ as an induced subgraph is of size $\min\{p,q\}$, and the number of such hypercubes is $\max\{\binom{p}{q}, \binom{q}{p}\}$.
\end{proposition}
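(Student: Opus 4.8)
The plan is to work directly in the $A_{p,q}$ model, where vertices are words with $p$ letters $a$ and $q$ letters $b$ and edges swap an adjacent $ab$ to $ba$. First I would identify what an induced hypercube $Q_d$ corresponds to combinatorially. An induced $Q_d$ in a partial cube is determined by $d$ pairwise ``commuting'' edges sharing a common vertex; in the $A_{p,q}$ model each edge-direction corresponds to swapping a particular adjacent $ab\leftrightarrow ba$ pair, and two such swaps can be performed independently (commute) exactly when they involve disjoint pairs of positions. So a $d$-cube should correspond to choosing $d$ mutually non-interfering swap-sites that can all be toggled freely, i.e.\ $d$ disjoint $ab$ (or $ba$) patterns that can each independently be flipped. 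The key structural observation is that a subword $aa$ or $bb$ locks a position, whereas an alternating block allows swaps; the maximal independent freedom occurs when one arranges the word so that it contains $\min\{p,q\}$ independent $ab$-bonds.

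Concretely, I would argue that the dimension of the largest induced cube equals the maximum number of simultaneously swappable disjoint $ab$-pairs, and that this maximum is $\min\{p,q\}$: with $p$ letters $a$ and $q$ letters $b$ one can form at most $\min\{p,q\}$ disjoint $ab$-bonds, and the word $(ab)^{\min}\,a^{p-\min}$ (or $(ab)^{\min}b^{q-\min}$, whichever has the surplus letter) realizes exactly this many independent swaps, giving an induced $Q_{\min\{p,q\}}$. For the upper bound I would use the isometric embedding $\alpha$ into $Q_{pq}$ from the proof of Theorem~\ref{thm:median}: a $d$-cube in a partial cube uses $d$ distinct $\Theta$-classes, and I would show each $\Theta$-class of $A_{p,q}$ corresponds to moving one fixed $b$ past one fixed $a$, so the classes are indexed by (b-index, a-index) pairs; a single induced cube can only use $d$ classes that are ``compatible,'' and the compatibility constraint caps $d$ at $\min\{p,q\}$.

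For the count of largest cubes, I would characterize which vertices serve as the ``bottom corner'' (all-$0$ vertex under $\alpha$) of a maximal $Q_{\min\{p,q\}}$ and show these are in bijection with the placements enumerated by $\max\{\binom{p}{q},\binom{q}{p}\}$. Assuming WLOG $q\le p$ (so $\min\{p,q\}=q$ and the surplus letters are $a$'s), each maximal cube is obtained by fixing, for each of the $q$ letters $b$, a designated adjacent $a$ it may swap with, in a non-overlapping left-to-right fashion; counting these configurations reduces to choosing where the $q$ bonds sit among the $p$ available $a$-slots, which gives $\binom{p}{q}=\max\{\binom{p}{q},\binom{q}{p}\}$ (the other binomial vanishes unless $p=q$, and symmetry handles $p<q$). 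I expect the main obstacle to be the upper-bound/uniqueness argument: proving that no cleverly interleaved arrangement yields more than $\min\{p,q\}$ independent commuting swaps, and that distinct bond-configurations give distinct induced cubes without double-counting. I would handle this by appealing to the $\Theta$-class structure of partial cubes together with Proposition~\ref{prop:distance-in-Pi}, which pins down distances exactly and thereby forbids any ``extra'' cube direction beyond the $\min\{p,q\}$ disjoint bonds.
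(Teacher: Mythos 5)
Your lower-bound construction and your count coincide with the paper's: the word $(ab)^{m}a^{p-m}$ (with $m=\min\{p,q\}$, surplus letters appended) spans an induced $Q_m$ by toggling the $m$ disjoint bonds independently, and arranging $q$ bonds among $p$ items ($q$ bonds plus $p-q$ free $a$s) gives the $\binom{p}{q}=\max\{\binom{p}{q},\binom{q}{p}\}$ maximum cubes. The problem is the upper bound, which you yourself flag as ``the main obstacle'' and then leave to an unspecified ``compatibility constraint'' on $\Theta$-classes, together with the hope that Proposition~\ref{prop:distance-in-Pi} ``forbids any extra cube direction.'' Distances alone cannot do this job: $A_{p,q}$ embeds isometrically into $Q_{pq}$ and has diameter $pq$, which is far larger than $\min\{p,q\}$, so no purely metric fact caps the dimension of an induced cube. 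The cap is a local, square-level phenomenon, and that is exactly the step your proposal never supplies.

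Concretely, the missing content is the following pair of facts, which is how the paper argues. (i) If $W$ induces a hypercube $Q_{m+1}$ and $x,y$ are two neighbours of $v\in W$ inside $W$, then $x$ and $y$ have a second common neighbour $z\neq v$ in the graph (the fourth vertex of the corresponding square of the cube). (ii) If the two edges $vx$, $vy$ come from \emph{overlapping} swap sites, i.e.\ $v=s\,aba\,t$ with $x=s\,baa\,t$, $y=s\,aab\,t$ (or the symmetric $bab$ pattern), then $v$ is the \emph{only} common neighbour of $x$ and $y$, because the only other candidate swaps would have to exchange two equal letters. Together, (i) and (ii) force the edge-directions at any vertex of an induced cube to be pairwise disjoint $ab/ba$ sites; each such site consumes one $a$ and one $b$, so there are at most $\min\{p,q\}$ of them, and hence the cube has a vertex of degree at most $\min\{p,q\}$, contradicting $(m+1)$-regularity of $Q_{m+1}$. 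In your $\Theta$-class language this is the statement that two classes sharing either their $b$-index or their $a$-index never lie on a common $4$-cycle, after which the classes of an induced cube form a partial permutation matrix in $[q]\times[p]$ and the bound follows; but your sentence ``two such swaps commute exactly when they involve disjoint pairs of positions'' asserts (ii) without proof, and without (i) it says nothing yet about induced cubes. The same degree argument is also what guarantees, for your counting step, that \emph{every} maximum cube arises from a bond configuration, so distinct configurations are neither missed nor double-counted. Once (i) and (ii) are proved, your plan closes and is essentially the paper's proof.
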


\begin{proof}
    Let $m = \min\{p,q\}$. Let $(ab)^m x \in V(A_{p,q})$. Vertices $\{ t_1\ldots t_m x \colon t_i \in \{ab, ba\}\}$ clearly induce $Q_m$ in $A_{p,q}$ (remove $x$ and replace each $ab$ with $0$ and each $ba$ with $1$).

    Suppose that $A_{p,q}$ contains $Q_{m+1}$ as an induced subgraph, and let $W$ be a vertex set of $Q_{m+1}$ in $A_{p,q}$. Observe that if $v \in V(Q_{m+1})$ and $x,y \in N(v)$, then $x,y$ have another common neighbor $z \neq v$. Without loss of generality assume that $m = q$. If $v = s aba t \in V(A_{p,q})$, then the vertices $sbaat$ and $saabt$ cannot have another common neighbor. Thus for every vertex $w \in W$ and for every $b$ in $w$, at most one neighbor of $w$ in $W$ is obtained by switching this $b$. Hence $\deg_{Q_{m+1}}(w)$ is at most the number of $b$s in $w$ which is equal to $q = m$. This is a contradiction since $Q_{m+1} \cong A_{p,q}[W]$ is $(m+1)$-regular.

    To count the number of $Q_m$s in $A_{p,q}$, again assume (without loss of generality) that $m = q$. After $q$ $ab$s are positioned into a word of length $p$, the whole word is determined (the remaining positions are filled with $a$s), and we obtain a word of length $p+q$ with $p$ $a$s and $q$ $b$s. This can be done in $\binom{p}{q}$ different ways, and each of them corresponds to a different induced hypercube $Q_m$.
\end{proof}

%%%%%%%%%%%%%%%%%%%%%%%%%%%%%%%%%%%%%
\section{Automorphisms}
\label{sec:automorphisms}
%%%%%%%%%%%%%%%%%%%%%%%%%%%%%%%%%%%%%

In this section we determine the automorphism group of weighted Padovan graphs. In view of Theorem~\ref{thm:isomorphisms}, this task is equivalent to the one of determining $\aut(\Pi_{p,q})$.

We first consider the trivial cases. 
If $\min\{p,q\} = 0$, then $\Pi_{p,q} \cong K_1$, so $\aut(\Pi_{p,q})$ is trivial. If $\min\{p,q\} = 1$, then $\Pi_{p,q}$ is isomorphic to the path on $\max\{p+1,q+1\}$ vertices, so $\aut(\Pi_{p,q}) = \mathbb{Z}_2$. Thus we assume that $\min\{p,q\} \geq 2$ in the following.

Let $G = \Pi_{p,q}$ through the whole section. The graph $G$ has exactly two leaves: $0 = (0,\ldots,0)$ and $pq = (p,\ldots,p)$. We say that a vertex $x \in V(G)$ is \emph{lonely} with respect to the leaf $\ell$ if it has exactly one neighbor $y \in V(G)$ such that $d(y,\ell) = d(x,\ell) - 1$. For short, we simply say that $x$ is lonely if it is lonely with respect to $0$. We say that vertices $y \in V(G)$ that are at distance $d\geq 0$ from the leaf $\ell$ belong to \emph{layer} $d$ with respect to $\ell$. When $\ell=0$, we simply say that they are in layer $d$.

\begin{lemma}
    \label{lem:lonely}
    If $x \in V(G)$ is lonely, then all nonzero parts in the weak partition of $x$ are of the same size.
\end{lemma}

\begin{proof}
    Suppose that $x$ has at least two nonzero parts of different sizes, so $x = (\lambda_1, \ldots, \lambda_q)$ and there exist $i,j \in [q]$, $i \neq j$, $\lambda_i > \lambda_j \geq 1$. Let $y$ be obtained from $x$ by changing $\lambda_i$ to $\lambda_i-1$, and let $z$ be obtained from $x$ by changing $\lambda_j$ to $\lambda_j-1$. Clearly $xy, xz \in E(G)$, and $y,z$ are in the layer $d(x,0) - 1$. Thus $x$ is not lonely.
\end{proof}

\begin{lemma}
    \label{lem:at-most-one-lonely-neighbor}
    If $w \in V(G)$ and $\ell$ is a leaf of $G$, then $w$ has at most one lonely (w.r.t.\ $\ell$) neighbor $x$ such that $d(x,\ell) = d(w,\ell) + 1$.
\end{lemma}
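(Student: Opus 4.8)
The plan is to describe the lonely upper neighbours of $w$ explicitly and then to bound their number. First I would record that $G=\Pi_{p,q}$ is a median graph by Theorem~\ref{thm:median}, hence bipartite, so that every neighbour of $w$ lies in layer $d(w,\ell)\pm1$; thus a lonely upper neighbour is a lonely vertex $x$ with $wx\in E(G)$ and $d(x,\ell)=d(w,\ell)+1$, and for any such $x$ the vertex $w$ is one of the neighbours of $x$ in the layer below. Since the complementation map $(\lambda_1,\dots,\lambda_q)\mapsto(p-\lambda_q,\dots,p-\lambda_1)$ is an automorphism of $\Pi_{p,q}$ interchanging the two leaves $0$ and $pq$, I may assume $\ell=0$.

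Next I would apply Lemma~\ref{lem:lonely}: a lonely upper neighbour $x$ of $w$ has all its nonzero parts equal, so $x=(c^{r},0^{q-r})$ for some $1\le c\le p$ and $1\le r\le q$. As $x$ is lonely, it has a single neighbour in the layer below, namely the partition obtained by decreasing its last nonzero part, and this neighbour must be $w$. Hence
\[
w=(c^{r-1},\,c-1,\,0^{q-r}),
\]
so each lonely upper neighbour of $w$ corresponds to a pair $(c,r)$ realising $w$ in exactly this form. The lemma therefore reduces to showing that at most one such pair $(c,r)$ exists.

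To prove this I would recover $(c,r)$ from the largest part $M$ of $w$ and its multiplicity $t$. If $r\ge2$, the parts equal to $c$ are precisely the largest parts of $w$, which forces $c=M$ and $r=t+1$ and requires $w$ to have the exact shape $(M^{t},M-1,0^{q-t-1})$; if $r=1$, then $w=(c-1,0^{q-1})$ has at most one nonzero part and $c=M+1$. When $w$ has at least two nonzero parts only the first case can occur and it determines $(c,r)$ uniquely, while when $w$ has at most one nonzero part the second case supplies a candidate. These two descriptions can be simultaneously consistent only in a single degenerate configuration, and away from it the lonely upper neighbour is unique.

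I expect that degenerate configuration to be the crux of the argument. The two readings coincide precisely when $w$ is the unique neighbour of the leaf, that is $w=(1,0^{q-1})$ for $\ell=0$: there $(c,r)=(2,1)$ produces the uniform partition $(2,0^{q-1})$ and $(c,r)=(1,2)$ produces $(1,1,0^{q-2})$, and both are lonely and lie in the layer above $w$. Geometrically this $w$ sits at the bottom of the first $4$-cycle of $\Pi_{p,q}$, exactly where the largest-part bookkeeping degenerates. The main work is thus to dispose of this single vertex separately — either by a direct inspection, or by applying the lemma only to vertices $w$ other than the leaf's neighbour, for which the ambiguity provably cannot arise — whereupon the uniqueness argument above yields the bound in all remaining cases.
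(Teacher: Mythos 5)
Your analysis follows the same basic route as the paper's proof (reduce to $\ell=0$, then use Lemma~\ref{lem:lonely} to force every lonely upper neighbour of $w$ to have the shape $(c^{r},0^{q-r})$, hence $w=(c^{r-1},c-1,0^{q-r})$), but you carried it out more carefully, and the care pays off: what you call the ``degenerate configuration'' is not a residual case waiting to be disposed of, it is a genuine counterexample to the lemma as stated. For $w=(1,0^{q-1})$ and $\ell=0$, with $\min\{p,q\}\ge 2$, both upper neighbours $(2,0^{q-1})$ and $(1,1,0^{q-2})$ are lonely, since each has $w$ as its unique lower neighbour; this is already visible in $\Pi_{2,2}$ (Figure~\ref{fig:automorphism}), where $(1,0)$ lies below the $4$-cycle whose two upper vertices $(2,0)$ and $(1,1)$ are both lonely. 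So your first proposed fix, ``direct inspection'' of that vertex, cannot succeed. The only viable option is your second one: prove the statement for all $w$ with $d(w,\ell)\ge 2$ (equivalently, for all $w$ other than the leaf and its neighbour; the leaf itself is trivial), which your $(c,r)$-bookkeeping in fact does in full.

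For comparison, the paper's own proof goes wrong exactly at the point where you were careful: from loneliness of $\mu_i$ (the partition $w$ with $\lambda_i$ raised by one) it concludes that ``$\lambda_j=\lambda_i+1$ for all $j\in[q]\setminus\{i\}$'', whereas Lemma~\ref{lem:lonely} yields this only for the \emph{nonzero} parts $\lambda_j$; when $w$ has zero parts the conclusion fails, and that is precisely how the two lonely neighbours of $(1,0^{q-1})$ slip through. The damage is contained, because Lemma~\ref{lem:at-most-one-lonely-neighbor} is invoked only inside the proof of Lemma~\ref{lem:next-layer}, where the relevant vertex $w$ lies in a layer $d\ge 2$, so the restricted statement you actually proved is all that is ever used (the exceptional vertex is treated separately in Theorem~\ref{thm:automorphisms}, where the two neighbours of the vertex $1$ are explicitly allowed to be either fixed or swapped). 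In short: your argument is correct and complete once the hypothesis $d(w,\ell)\ge 2$ (or $d(w,\ell)\neq 1$) is added to the statement, and you have effectively found, and repaired, an error in the paper rather than left a gap in your own proof.
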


\begin{proof}
    Without loss of generality let $\ell=0$, and denote $d(w,0) = d$. Let $w = (\lambda_1, \ldots, \lambda_q)$. For $i\in [q]$, let $\mu_i$ be obtained from $\lambda$ by changing $\lambda_i$ to $\lambda_i+1$. Clearly, the neighbors of $w$ in layer $d+1$ are contained in the set $\{\mu_i \colon i \in [q]\}$. Suppose that some $\mu_i \in N(w)$ is lonely. Then $\lambda_j = \lambda_i+1$ for all $j \in [q]\setminus\{i\}$ by Lemma~\ref{lem:lonely}. But then, since $\min\{p,q\} \geq 2$, no other $\mu_i$ can be lonely since $\lambda_i$ would be smaller than the other part(s). Hence, $w$ has at most one lonely neighbor in layer $d+1$.
\end{proof}

\begin{lemma}
    \label{lem:at-most-one-common-neighbor}
    Let $y,z \in V(G)$, $y \neq z$, such that $d(y,\ell) = d(z,\ell)=d$ where $\ell$ is a leaf of $G$. Then $y$ and $z$ have at most one common neighbor in the layer $d+1$.
\end{lemma}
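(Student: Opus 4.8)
The plan is to work in the partition model $\Pi_{p,q}$ and exploit the explicit distance formula from Proposition~\ref{prop:distance-in-Pi}. Without loss of generality take $\ell = 0$, so that for any vertex $x = (\lambda_1, \ldots, \lambda_q)$ we have $d(x,0) = \sum_{i=1}^q \lambda_i$. Under this formula, a neighbor of a vertex in layer $d+1$ that also lies in layer $d$ is obtained precisely by decreasing one part by $1$; equivalently, a common neighbor $c$ of $y$ and $z$ in layer $d+1$ is a vertex with part-sum $d+1$ such that both $y$ and $z$ are obtained from $c$ by lowering a single part. I would translate the hypothesis $y \neq z$ with equal part-sums into this language and count how many such $c$ can exist.

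First I would fix notation: let $y = (\lambda_1, \ldots, \lambda_q)$ and $z = (\mu_1, \ldots, \mu_q)$, both with part-sum $d$. If $c = (\nu_1, \ldots, \nu_q)$ is a common neighbor in layer $d+1$, then $c$ is covered by both $y$ and $z$ in the partition order, meaning $y$ arises from $c$ by subtracting $1$ from exactly one part and similarly for $z$. The key structural observation is that two distinct vertices $y$ and $z$ that are both covered by the same $c$ must differ in exactly two coordinates (as multisets of parts, $y$ and $z$ each omit one unit from $c$ but from different part-values). So $y$ and $z$ agree in all but (at most) two of the sorted part-values, and the two values where they differ are forced by $c$.

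The main step is then to argue uniqueness of $c$ given such a $y$ and $z$. Because parts are kept in nonincreasing order, I would show that once we know $y$ and $z$ differ and share the same part-sum $d$, the candidate common parent $c$ must be the coordinatewise maximum $\max(\lambda_i, \mu_i)$ (in the appropriate sorted sense), and that this $c$ is uniquely determined and has part-sum exactly $d+1$. Concretely, if both $y$ and $z$ are obtained from $c$ by a single decrement, then $c_i \geq \max(\lambda_i, \mu_i)$ for all $i$ while $\sum c_i = d+1 = \sum \max(\lambda_i,\mu_i)$ can hold only when $c = (\max(\lambda_i,\mu_i))_i$; I would verify that this common-parent candidate, when it is a valid weak partition, is forced, so there is at most one. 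The only subtlety to handle carefully is the reordering: since the coordinates of a weak partition are sorted, incrementing a part may change the sorted order, so I would phrase the covering relation in terms of the underlying multiset of parts (or appeal directly to the distance formula, which is permutation-invariant) rather than literal coordinates.

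I expect the main obstacle to be precisely this bookkeeping with the sorted-order constraint: verifying that distinct $y,z$ in the same layer cannot both be decrements of two genuinely different valid parents $c$ and $c'$. I would dispose of it by a short contradiction argument. If $c \neq c'$ were two common neighbors in layer $d+1$, then $y$ and $z$ each differ from each of $c,c'$ in a single part; chasing the four single-coordinate differences forces $c = c'$ because $y$ and $z$ already pin down which two part-values must be raised. In other words, the multiset of parts of any common parent is determined by the symmetric structure of $\{y,z\}$, leaving no room for a second candidate. This yields at most one common neighbor in layer $d+1$, completing the proof.
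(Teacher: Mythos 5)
Your argument is correct, but its key uniqueness step is genuinely different from the paper's. Both proofs take $\ell = 0$ without loss of generality, work in $\Pi_{p,q}$ with the distance formula of Proposition~\ref{prop:distance-in-Pi}, and start from the same observation: a common upper neighbor forces $y$ and $z$ to agree except in two coordinates $i \neq j$ with $\mu_i = \lambda_i + 1$ and $\lambda_j = \mu_j + 1$. They diverge after that. The paper (using Lemma~\ref{lem:lonely} to set up this two-coordinate picture) rules out competitors one by one: any other upper neighbor $x'$ of $y$ comes from incrementing some $\lambda_m$ with $m \neq i$, and then $x'$ disagrees with $z$ in at least the two coordinates $i$ and $m$, hence is not adjacent to $z$. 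You instead pin down the candidate outright: since covering between sorted weak partitions amounts to coordinatewise domination with part-sums differing by one, any common upper neighbor $c = (\nu_1, \ldots, \nu_q)$ satisfies $\nu_i \geq \max(\lambda_i,\mu_i)$ for all $i$, while $\sum_i \nu_i = d+1$ and $\sum_i \max(\lambda_i,\mu_i) \geq d+1$ (the latter because $y \neq z$ have equal part-sums); the squeeze forces $c = (\max(\lambda_i,\mu_i))_i$, a single well-defined tuple, so uniqueness is immediate. Your route is self-contained (it needs neither Lemma~\ref{lem:lonely} nor a check over alternative neighbors) and conceptually sharper: it exhibits the common neighbor as the join of $y$ and $z$ in the lattice of partitions in a box, and it dualizes at once to common neighbors in layer $d-1$. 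What the paper's route buys is economy within its section: it reuses the lonely-vertex machinery it has already built for the automorphism analysis. Two small blemishes to fix in your write-up: you say $c$ ``is covered by'' $y$ and $z$ when you mean that $c$ covers them, and your closing ``chase the four single-coordinate differences'' contingency is redundant once the coordinatewise-maximum argument is in place --- that argument already handles the sorted-order bookkeeping, because the coordinatewise maximum of two nonincreasing tuples is again nonincreasing.
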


\begin{proof}
    Without loss of generality assume that $\ell=0$ and suppose that $y$ and $z$ have at least one common neighbor, $x$, such that $d(x,0)=d+1$. Since $x$ is not lonely, not all parts of $x$ are of the same size by Lemma~\ref{lem:lonely}. Let $y = (\lambda_1, \ldots, \lambda_q)$ and $z = (\mu_1, \ldots, \mu_q)$. Thus there exist $i \neq j$ such that $x$ is obtained from $y$ by changing $\lambda_i$ to $\lambda_i+1$, which is equal to obtaining $x$ from $z$ by changing $\mu_j$ to $\mu_j+1$. This means that $\lambda_k = \mu_k$ for all $k \in [q] \setminus \{i,j\}$, $\lambda_i+1=\mu_i$ and $\lambda_j = \mu_j+1$.

    Let $x'$ be another neighbor of $y$ in layer $d+1$. Then $x'$ is obtained from $y$ by changing $\lambda_m$ to $\lambda_m+1$ for some $m \in [q] \setminus \{i\}$. But then as $\lambda_m + 1 \neq \mu_m$, $x'$ is not a common neighbor of $y$ and $z$.
\end{proof}

Let $G(d,\ell) = \{ x \in V(G) \colon d(x,\ell) \leq d \}$ where $\ell$ is a leaf in $G$.

\begin{lemma}
    \label{lem:next-layer}
    Let $d \geq 2$ and let $\varphi \colon G(d,0) \to V(G)$. Suppose that $\varphi$ is an automorphism between $G(d,0)$ and $\varphi(G(d,0))$. Then there is at most one possibility of extending $\varphi$ to $\psi \colon G(d+1,0) \to V(G)$ such that $\psi|_{G(d,0)} = \varphi$ and $\psi$ is an automorphism between $G(d+1,0)$ and $\psi(G(d+1,0))$.
\end{lemma}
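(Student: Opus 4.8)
The plan is to reduce the extension problem to a purely local, layer-by-layer question around the leaf $0$. Since $\pad{n}{k}\cong\Pi_{p,q}$ is a median graph (Theorem~\ref{thm:median}), it is in particular a partial cube, so adjacent vertices always lie in consecutive layers with respect to $0$. Hence $G(d+1,0)\setminus G(d,0)$ is exactly layer $d+1$, and every vertex $x$ of layer $d+1$ has all of its neighbours inside $G(d+1,0)$ lying in layer $d$; call these the \emph{down-neighbours} $D(x)$ of $x$. To extend $\varphi$ we must choose, for each such $x$, an image $\psi(x)$ that is a common neighbour of $\{\varphi(y):y\in D(x)\}$ and that respects all non-adjacencies. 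If no extension exists the statement holds vacuously, so I would assume $\psi$ exists and show that its values on layer $d+1$ are forced.

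First I would record the structural input that $\varphi$ is \emph{layer-preserving}: it maps $G(d,0)$ isomorphically onto the ball $G(d,\ell')$ around a leaf $\ell'=\varphi(0)\in\{0,pq\}$, sending layer $i$ around $0$ to layer $i$ around $\ell'$ for $0\le i\le d$. Using that $G$ has exactly the two leaves $0$ and $pq$, that the balls around them have equal order via the antipodal automorphism that swaps the two leaves, and that adjacency changes the layer index by exactly one, one concludes that $\varphi(G(d,0))$ is the full ball $G(d,\ell')$, and therefore that any extension $\psi$ must send layer $d+1$ into layer $d+1$ around $\ell'$: each $\psi(x)$ is adjacent to $\varphi(y)$ in layer $d$, lies outside the image ball $G(d,\ell')$ (as $\psi$ is injective and $x\notin G(d,0)$), and so sits in layer $d+1$. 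This is the step I expect to be the main obstacle, since it is what licenses applying the preceding lemmas on the image side; it is precisely where one must rule out that the image of the ball folds back so that $\varphi(0)$ fails to be a genuine leaf of $G$.

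Granting layer-preservation, the two cases fall out from the earlier lemmas. If $|D(x)|\ge 2$, I pick $y,z\in D(x)$; then $x$ is a common neighbour of $y,z$ in layer $d+1$, and since $\varphi(y)\neq\varphi(z)$ also lie in layer $d$ around $\ell'$, Lemma~\ref{lem:at-most-one-common-neighbor} shows that $\psi(x)$ is the \emph{unique} common neighbour of $\varphi(y),\varphi(z)$ in layer $d+1$, hence is determined. If $|D(x)|=1$, then $x$ is lonely with unique down-neighbour $y$; because $\psi$ preserves adjacency and layering, $\psi(x)$ is lonely with respect to $\ell'$ and is a neighbour of $\varphi(y)$ in layer $d+1$, so Lemma~\ref{lem:at-most-one-lonely-neighbor} (applied to $\varphi(y)$) again forces a unique value of $\psi(x)$; the remaining images of $N(y)$ do not compete for this slot, being either images of down-neighbours (in layer $d-1$) or images of non-lonely up-neighbours. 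Since every value of $\psi$ on layer $d+1$ is thereby forced, there is at most one extension, completing the argument.
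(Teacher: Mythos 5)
Your proposal follows the paper's proof in its essentials: the same split into two cases ($x$ has two down-neighbours / $x$ is lonely), resolved respectively by Lemma~\ref{lem:at-most-one-common-neighbor} and Lemma~\ref{lem:at-most-one-lonely-neighbor}, applied on the image side to force the value of $\psi(x)$. The only substantive difference is that you isolate, as a preliminary step, the claim that $\varphi(0)$ is a leaf of $G$ (i.e.\ $\varphi(0)\in\{0,pq\}$) and that $\varphi$ carries layer $i$ around $0$ onto layer $i$ around $\varphi(0)$. The paper handles this point by bare assertion (``since $\varphi$ is an automorphism on its image, $\varphi(0)$ is also a leaf''), so in flagging it as the crux you are, if anything, more attentive than the published argument.

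However, the justification you sketch for that step does not work. From the hypotheses on $\varphi$ alone (an isomorphism of $G(d,0)$ onto an induced subgraph of $G$), one cannot conclude that $\varphi(0)$ is a leaf of $G$, nor that the image is the ball $G(d,\varphi(0))$. Concretely, in $\Pi_{2,2}$ with $d=2$ the ball $G(2,0)$ induces a star with center $(1,0)$ and leaves $(0,0)$, $(2,0)$, $(1,1)$; the map that swaps $(0,0)\leftrightarrow(2,0)$ and fixes the other two vertices is an isomorphism of this ball onto itself, yet it sends $0$ to $(2,0)$, a vertex of degree $2$ in $G$, and it certainly does not preserve layers. The general facts you invoke (exactly two leaves, equal order of the two balls via the antipodal map, bipartite layering) do not exclude such maps, so your ``one concludes that $\varphi(G(d,0))$ is the full ball $G(d,\ell')$'' presupposes exactly what needs proving. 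The claim can only be rescued by exploiting the assumed existence of the extension $\psi$ (in the example above no extension exists, which is why the lemma itself survives), or by adding layer-preservation of $\varphi$ as a hypothesis --- which is how the lemma is actually used in the paper, where $\varphi$ is always either the restriction of a global automorphism or built layer by layer. Your argument never feeds $\psi$ into the layer-preservation step, so as written the step you yourself call the main obstacle remains a gap. (To be fair, the paper's proof shares this gap, since its assertion that $\varphi(0)$ is a leaf is false for the map exhibited above; the difference is that your write-up claims to derive the statement, and the derivation is invalid.)
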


\begin{proof}
    Let $x \in G(d+1,0) \setminus G(d,0)$. Observe that since $\varphi$ is an automorphism on its image, $\varphi(0)$ is also a leaf.

        \medskip \noindent \textbf{Case 1}: $x$ is not lonely.\\
        Then there exist vertices $y \neq z$ in layer $d$ that are both neighbors of $x$. By Lemma~\ref{lem:at-most-one-common-neighbor} $x$ is the unique common neighbor of $y$ and $z$. Vertices $\varphi(y)$ and $\varphi(z)$ are in layer $d$ with respect to $\varphi(0)$ and by Lemma~\ref{lem:at-most-one-common-neighbor} they have at most one common neighbor in layer $d+1$ with respect to $\varphi(0)$, say $x'$. Since $\varphi$ is an automorphism on its image, $\psi(x)$ can only be $x'$ (if it exists).

        \medskip \noindent \textbf{Case 2}: $x$ is lonely.\\
        Then $x$ has exactly one neighbor $w$ in layer $d$. By Lemma~\ref{lem:at-most-one-lonely-neighbor} $x$ is the only lonely neighbor of $w$. The vertex $\varphi(w)$ is in layer $d$ with respect to $\varphi(0)$, and by Lemma~\ref{lem:at-most-one-lonely-neighbor} it has at most one lonely neighbor, say $x'$. Thus $\psi(x)$ can only be $x'$ (if it exists).
\end{proof}

\begin{lemma}
    \label{lem:no-extension}
    Let $p \neq q$ and let $\varphi \colon G(2,0) \to V(G)$ map as follows:
    \begin{align*}
        \langle 0^q \rangle & \mapsto \langle 0^q \rangle,\\
        \langle 1, 0^{q-1} \rangle & \mapsto \langle 1, 0^{q-1} \rangle,\\
        \langle 2, 0^{q-1} \rangle & \mapsto \langle 1^2, 0^{q-2} \rangle,\\
        \langle 1^2, 0^{q-2} \rangle & \mapsto \langle 2, 0^{q-1} \rangle.
    \end{align*} Then $\varphi$ cannot be extended to an automorphism of $G$.
\end{lemma}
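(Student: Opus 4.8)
The plan is to assume for contradiction that $\varphi$ extends to an automorphism $\Psi$ of $G$, and to show that $\Psi$ is then rigidly forced by Lemma~\ref{lem:next-layer} until it collides with a degree count that can only be balanced when $p=q$.

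First I would fix the local picture. The graph $G(2,0)$ is the star with centre $\langle 1,0^{q-1}\rangle$ and leaves $\langle 0^q\rangle$, $\langle 2,0^{q-1}\rangle$, $\langle 1^2,0^{q-2}\rangle$, and $\varphi$ fixes $\langle 0^q\rangle$ while interchanging the two layer-$2$ vertices. Both of those are lonely, which is exactly why the extension is ambiguous at this first stage and why $\varphi$ must be prescribed on all of $G(2,0)$; from layer $2$ onwards Lemma~\ref{lem:next-layer} applies. Since an automorphism restricts to an automorphism of every $G(d,0)$ and $\varphi(0)=0$ is again a leaf, Lemma~\ref{lem:next-layer} shows that $\Psi$ is the unique layer-by-layer extension of $\varphi$. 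It therefore suffices to compute this forced extension and exhibit one vertex where it fails to be degree-preserving.

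Next I would track the forced images along the ``first-row'' chain and prove, by induction on $i$ for $2\le i\le m:=\min\{p,q\}$, that
$$\Psi\bigl(\langle i,0^{q-1}\rangle\bigr)=\langle 1^i,0^{q-i}\rangle.$$
The base case $i=2$ is the prescription of $\varphi$. For the inductive step, $\langle i+1,0^{q-1}\rangle$ is lonely with unique lower neighbour $\langle i,0^{q-1}\rangle$, so Case~2 of Lemma~\ref{lem:next-layer} forces its image to be the unique lonely neighbour of $\langle 1^i,0^{q-i}\rangle$ in the next layer. The crucial computation is that $\langle 1^i,0^{q-i}\rangle$ has exactly two neighbours in layer $i+1$: the vertex $\langle 2,1^{i-1},0^{q-i}\rangle$, which for $i\ge 2$ has two lower neighbours and is thus not lonely, and $\langle 1^{i+1},0^{q-i-1}\rangle$, which is lonely. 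Hence the forced image is $\langle 1^{i+1},0^{q-i-1}\rangle$, closing the induction. This is precisely conjugation of partitions, and it meets no obstruction for $i\le m$, since both the source $\langle i,0^{q-1}\rangle$ (needing $i\le p$) and the target $\langle 1^i,0^{q-i}\rangle$ (needing $i\le q$) stay inside the $p\times q$ box.

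Finally I would extract the contradiction at $i=m$, where the hypothesis $p\neq q$ is essential. Taking, without loss of generality, $p<q$, we have $m=p$ and $\Psi(\langle p,0^{q-1}\rangle)=\langle 1^p,0^{q-p}\rangle$. Counting covers and co-covers in the box gives $\deg_G(\langle p,0^{q-1}\rangle)=2$ (its first part is already maximal, so it can only shed the box of its first row or gain one box into a second row), whereas $\deg_G(\langle 1^p,0^{q-p}\rangle)=3$ (it can shed its last box, grow its first part to $2$ since $p\ge 2$, or open a new part since $p<q$ leaves an empty row). As automorphisms preserve degrees, this contradicts the forced value of $\Psi$; the case $p>q$ is symmetric with rows and columns exchanged, yielding degrees $3$ and $2$. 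I expect the main obstacle to be the clean verification that exactly one of the two next-layer neighbours along the chain is lonely, so that the forced extension is well defined and tracks conjugation. With that in hand, the degree discrepancy at $\min\{p,q\}$ --- present precisely when $p\neq q$, and disappearing when $p=q$, where conjugation genuinely is an automorphism --- completes the argument.
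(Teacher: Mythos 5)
Your proposal is correct and follows essentially the same route as the paper: both force, by induction along the chain $\langle i,0^{q-1}\rangle \mapsto \langle 1^i,0^{q-i}\rangle$ using loneliness (Lemmas~\ref{lem:lonely}--\ref{lem:next-layer}), and then derive a contradiction at $i=\min\{p,q\}$ where the asymmetry of the $p\times q$ box bites. Your final step counts total degrees ($2$ versus $3$) while the paper counts neighbors in the next layer ($1$ versus $2$), but since both vertices have exactly one lower neighbor these are the same obstruction.
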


\begin{proof}
    Let $p < q$. Suppose that it is possible to extend $\varphi$ to $\psi \colon G(p,0) \to V(G)$ such that $\psi$ is an automorphism on its image. Then we show that $\psi(\langle p, 0^{q-1} \rangle) = \langle 1^p, 0^{q-p} \rangle$, $\psi(\langle 1^p, 0^{q-p} \rangle) = \langle p, 0^{q-1} \rangle$. This is clearly true for $p=2$. Consider $p \geq 3$ and assume that it is true for $p-1$. Neighbors of $\langle p-1, 0^{q-1} \rangle$ in layer $p$ are $\langle p, 0^{q-1} \rangle$ (which is lonely) and $\langle p-1,1, 0^{q-2} \rangle$. Neighbors of $\langle 1^{p-1}, 0^{q-p+1} \rangle$ in layer $p$ are $\langle 1^p, 0^{q-p} \rangle$ (which is lonely) and $\langle 2, 1^{p-2}, 0^{q-p+1} \rangle$. Since $\psi$ should be an automorphism, lonely neighbors have to be mapped into lonely neighbors, thus $\psi(\langle p, 0^{q-1} \rangle) = \langle 1^p, 0^{q-p} \rangle$, $\psi(\langle 1^p, 0^{q-p} \rangle) = \langle p, 0^{q-1} \rangle$. Note that $\psi$ is unique by Lemma~\ref{lem:next-layer}.
    
    Observe that $G(p+1,0)\setminus G(p,0)$ contains all weak partitions of $p+1$ except from $\langle p+1, 0^{q-1} \rangle$. Thus $\langle p, 0^{q-1} \rangle \in G(p,0)$ has only one neighbor in layer $p+1$: $\langle p,1, 0^{q-2} \rangle$. However, $\psi(\langle p, 0^{q-1} \rangle) = \langle 1^p, 0^{q-p} \rangle$ has two neighbors in layer $p+1$: $\langle 1^{p+1}, 0^{q-p-1} \rangle$ and $\langle 2, 1^{p-1}, 0^{q-p} \rangle$. Thus, $\psi$ cannot be extended into an automorphism on $G(p+1,0)$.

    An analogous argument settles the case when $p > q$.
\end{proof}

\begin{theorem}
    \label{thm:automorphisms}
    Let $\min\{p,q\} \geq 2$. If $p \neq q$, then $\aut(G) = \mathbb{Z}_2$, and if $p = q$, then $\aut(G) = \mathbb{Z}_2 \times \mathbb{Z}_2$.
\end{theorem}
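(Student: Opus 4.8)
The plan is to determine $\aut(G)$ where $G = \Pi_{p,q}$ with $\min\{p,q\} \geq 2$, by exploiting the rigidity forced by the two leaves $0 = (0,\ldots,0)$ and $pq = (p,\ldots,p)$, together with the layer-by-layer extension machinery established in Lemmas~\ref{lem:lonely}--\ref{lem:no-extension}. The first observation is that any automorphism $\varphi$ must permute the two leaves, so either $\varphi(0) = 0$ or $\varphi(0) = pq$. This splits the analysis into automorphisms fixing each leaf and those swapping them.

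First I would handle the leaf-swapping case. Define the map $\iota \colon V(G) \to V(G)$ by $\iota(\lambda_1, \ldots, \lambda_q) = (p - \lambda_q, \ldots, p - \lambda_1)$; geometrically this is the ``complementation'' sending each part $\lambda_i$ to $p - \lambda_i$ and reversing the order to keep the parts nonincreasing. One checks that $\iota$ is a well-defined involution of $V(G)$ that sends adjacent partitions to adjacent partitions (adding $1$ to a part becomes subtracting $1$ from the complementary part), hence $\iota \in \aut(G)$, and $\iota$ swaps $0$ and $pq$. Therefore it suffices to understand the subgroup $H$ of automorphisms that fix the leaf $0$, since $\aut(G) = H \cup \iota H$ and $|\aut(G)| = 2|H|$.

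Next I would pin down $H$ using the extension lemmas. An automorphism $\varphi$ fixing $0$ preserves every layer (distance from $0$), so it is determined by its action on layer~$1$. Layer~$1$ consists of the single vertex $\langle 1, 0^{q-1}\rangle$ (the unique partition of $1$), which must be fixed. Thus $\varphi$ agrees with the identity on $G(1,0)$. Layer~$2$ consists of $\langle 2, 0^{q-1}\rangle$ and $\langle 1^2, 0^{q-2}\rangle$; an automorphism fixing layers $0$ and $1$ either fixes both of these or swaps them, giving at most two candidate restrictions to $G(2,0)$. By Lemma~\ref{lem:next-layer}, each candidate extends in at most one way to an automorphism of all of $G$, so $|H| \leq 2$. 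The identity-on-$G(2,0)$ candidate extends to the identity automorphism, so the identity always lies in $H$. The remaining question is whether the swapping candidate of Lemma~\ref{lem:no-extension} extends to a genuine automorphism; by that lemma it does \emph{not} extend when $p \neq q$, forcing $H = \{\mathrm{id}\}$ and hence $\aut(G) = \{\mathrm{id}, \iota\} \cong \mathbb{Z}_2$.

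Finally, for the symmetric case $p = q$ I must exhibit a nontrivial element of $H$, making $|H| = 2$ and $\aut(G) = \mathbb{Z}_2 \times \mathbb{Z}_2$. The natural candidate is the ``transpose'' (conjugate) map $\tau$ sending a weak partition $\lambda$ to its conjugate partition $\lambda'$ (where $\lambda'_i$ counts the parts of $\lambda$ of size at least $i$); when $p = q$ this is a well-defined involution of $V(\Pi_{p,q})$ fixing $0$ and $pq$, and conjugation famously interchanges ``adding a box in a row'' with ``adding a box in a column,'' so $\tau$ preserves adjacency and lies in $H$. Since $\tau \neq \mathrm{id}$ (it swaps $\langle 2, 0^{q-1}\rangle$ and $\langle 1^2, 0^{q-2}\rangle$, matching the Lemma~\ref{lem:no-extension} map, which \emph{does} extend when $p = q$), we get $H = \{\mathrm{id}, \tau\} \cong \mathbb{Z}_2$, and $\aut(G) = H \times \langle \iota \rangle \cong \mathbb{Z}_2 \times \mathbb{Z}_2$ once one checks $\iota$ and $\tau$ commute. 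The main obstacle I anticipate is precisely the nonextendability in Lemma~\ref{lem:no-extension}: verifying that the swap cannot propagate through higher layers is the crux of the distinction $p \neq q$ versus $p = q$, and it is exactly where the asymmetry of the partition rectangle is used. Checking that $\iota$ and $\tau$ are honest graph automorphisms (not merely vertex bijections) is routine but must be done, and confirming the group structure $\mathbb{Z}_2 \times \mathbb{Z}_2$ requires verifying that the two generators commute and that their composition is not the identity.
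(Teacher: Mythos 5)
Your proposal is correct and takes essentially the same route as the paper: the same extension machinery (Lemma~\ref{lem:next-layer}) gives the upper bound, Lemma~\ref{lem:no-extension} separates $p \neq q$ from $p = q$, and the same two explicit automorphisms --- complementation with reversal (your $\iota$, the paper's $\tau$) and conjugation of partitions (your $\tau$, the paper's $\rho$) --- realize the lower bounds. The only difference is organizational: you work with the stabilizer $H$ of the leaf $0$ and the coset decomposition $\aut(G) = H \cup \iota H$, whereas the paper directly bounds the four possible actions on $N[1]$ and then invokes Lagrange's theorem.
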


\begin{proof}
    Since $G$ has exactly two leaves, $0 = \langle 0^q \rangle$ and $pq = \langle p^q \rangle$, every automorphism of $G$ either maps $0 \mapsto 0$, $pq \mapsto pq$, or $0 \mapsto pq$, $pq \mapsto 0$. This uniquely determines how the unique neighbors $1 = \langle 1, 0^{q-1} \rangle$ and $pq-1 = \langle p^{q-1},p-1 \rangle$ of the leaves are mapped. However, since $\min\{p,q\} \geq 2$, vertices $1$ and $pq-1$ each have two neighbors, which can again be mapped either identically or into each other. For illustration, see Figure~\ref{fig:automorphism}.

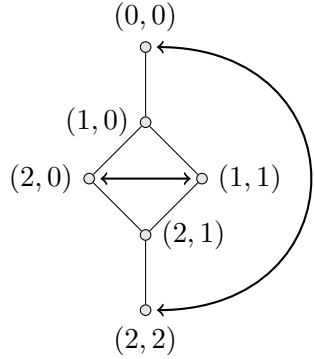
\begin{figure}[ht!]
    \centering
    \begin{tikzpicture}
    [scale=1.0,
    vert/.style={circle, draw, fill=black!10, inner sep=0pt, minimum width=4pt}, 
    double/.style={circle, draw, fill=black, inner sep=0pt, minimum width=4pt}, 
    central/.style={circle, draw, fill=black, inner sep=0pt, minimum width=4pt},
    ]

    \node[vert, label=below:{$(2,2)$}] (0) at (0,0) {};
    \node[vert, label=right:{$(2,1)$}] (1) at (0,1) {};
    \node[vert, label=left:{$(2,0)$}] (2) at (-0.75,1.75) {};
    \node[vert, label=right:{$(1,1)$}] (3) at (0.75,1.75) {};
    \node[vert, label=left:{$(1,0)$}] (5) at (0,2.5) {};
    \node[vert, label=above:{$(0,0)$}] (9) at (0,3.5) {};
    
    \draw (0) -- (1) -- (2);
    \draw (1) -- (3);
    \draw (2) -- (5) -- (9);
    \draw (3) -- (5);   

    \draw[<->, thick] (-0.6,1.75) -- (0.6,1.75);
    \draw[<->, thick] (0.15,0) to[bend right=90, looseness=2] (0.15,3.5);
    \end{tikzpicture}
    \caption{The smallest weighted Padovan graph ($\Pi_{2,2}$) with the automorphism group isomorphic to the Klein-four group $\mathbb{Z}_2 \times \mathbb{Z}_2$.}
    \label{fig:automorphism}
\end{figure}

    Let $\varphi \colon V(G) \to V(G)$ be an automorphism. By the above, there are only $2 \cdot 2 = 4$ possibilities of how $\varphi$ maps $N[1]$. By inductively applying Lemma~\ref{lem:next-layer}, $\varphi|_{N[1]}$ determines the whole $\varphi$ uniquely. Thus $\aut(G) \subseteq \mathbb{Z}_2 \times \mathbb{Z}_2$. 

    However, if $p \neq q$ and $\varphi(N[1]) = N[1]$, then Lemma~\ref{lem:no-extension} shows that $\varphi|_{N[1]} = id|_{N[1]}$. Thus $\aut(G) \neq \mathbb{Z}_2 \times \mathbb{Z}_2$, and by the Lagrange's theorem, $|\aut(G)| \in \{1,2\}$.

    Let $\tau \colon V(G) \to V(G)$ be defined so that $\tau((\lambda_1, \ldots, \lambda_q)) = (p-\lambda_q, \ldots, p-\lambda_1)$. Clearly, $\tau$ is well-defined. Let $xy \in E(G)$. So $x = (\lambda_1, \ldots, \lambda_q)$ and $y$ is obtained from $x$ by changing $\lambda_i$ to $\lambda_i+1$ for some $i \in [q]$. Thus $\tau(x) = (p-\lambda_q, \ldots, p - \lambda_i, \ldots, p-\lambda_1)$ and $\tau(y) = (p-\lambda_q, \ldots, p-\lambda_i-1, \ldots, p-\lambda_1)$, so $\tau(x) \tau(y) \in E(G)$. Similarly we obtain that $\tau(N(x)) = N(\tau(x))$. Hence, $\tau$ is an automorphism. Thus if $p \neq q$, $\aut(G) = \mathbb{Z}_2 = \{id, \tau\}$.

    If $p = q$, let $\rho \colon V(G) \to V(G)$ be defined as $\rho(\lambda) = \lambda'$. Recall that the \emph{conjugate} $\lambda'$ of a weak partition $\lambda = (\lambda_1, \ldots, \lambda_q)$ is obtained by interchanging the rows and columns of the Ferrers diagram of $\lambda$. In other words, the number of parts of $\lambda'$ equal to $i$ is $\lambda_i - \lambda_{i+1}$. Since $\lambda$ has $q$ parts which are all at most $p$, $\lambda'$ has at most $p$ parts which are all at most $q$. Finally, we add a sufficient number of parts of size 0 to the end of $\lambda'$ so it has exactly $p$ parts. Since $p=q$, $\rho$ is well-defined. Let $xy \in E(G)$. So $x = (\lambda_1, \ldots, \lambda_q)$ and $y$ is obtained from $x$ by changing $\lambda_i$ to $\lambda_i+1$ for some $i \in [q]$. Consider now $\rho(x)$ and $\rho(y)$. The number of parts that equal $j$ is equal in both $\rho(x)$ and $\rho(y)$ for all $j \in [q] \setminus \{ i-1, i\}$, while the number of parts of size $i-1$ and $i$ differ for $1$ between $\rho(x)$ and $\rho(y)$. Thus $\rho(x) \rho(y) \in E(G)$. Similarly, we see that $\rho(N(x)) = N(\rho(x))$, and so $\rho$ is an automorphism of $G$. Since $\rho(0) = 0$ and $\tau(0) = pq$, $\rho \neq \tau$. Thus $\aut(G) = \mathbb{Z}_2 \times \mathbb{Z}_2 = \{id, \tau, \rho, \tau \circ \rho\}$.     
\end{proof}

\section*{Acknowledgements}

This work has been supported by T\"{U}B\.{I}TAK and the Slovenian Research Agency under grant numbers 122N184 and BI-TR/22-24-002, respectively. Vesna Ir\v{s}i\v{c}, Sandi Klav\v{z}ar, and Gregor Rus acknowledge the financial support from the Slovenian Research Agency (research core funding P1-0297 and projects N1-0218, N1-0285, N1-0355, Z1-50003). Vesna Ir\v{s}i\v{c} also acknowledges the financial support from the European Union (ERC, KARST, 101071836).

\end{document}